\definecolor{wine}{rgb}{2,2,251}
\definecolor{myblue}{rgb}{0.2,0.4,2}
\newcolumntype{P}[1]{>{\arraybackslash}p{#1}}
\newcolumntype{M}[1]{>{\centering\arraybackslash}m{#1}}
\newcolumntype{C}[1]{%
	>{\vbox to 1ex\bgroup\vfill\centering}%
	p{#1}%
	<{\egroup}}  
\newcolumntype{L}[1]{>{\raggedright\arraybackslash}p{#1}}
\theoremstyle{plain}
\newtheorem{theorem}{Theorem}
\newtheorem{corollary}{Corollary}
\newtheorem{proposition}{Proposition}
\theoremstyle{definition}
\newtheorem{definition}{Definition}
\newtheorem{example}{Example}
\newcommand{\rec}{\fboxsep0pt\fbox{\rule{1.2em}{0pt}\rule{0pt}{1.2ex}}}
\newcommand{\squ}{\fboxsep0pt\fbox{\rule{0.6em}{0.1pt}\rule{0.1pt}{1.2ex}}}
\newcommand{\T}{\mathcal{T_{FF}}}
\newcommand{\Tt}[2]{\mathcal{F}(#1,#2)}
\DeclareRobustCommand{\stirling}{\genfrac\{\}{0pt}{}}
\newcommand{\genstirlingI}[3]{	\genfrac{[}{]}{0pt}{#1}{#2}{#3} }
\newcommand{\genstirlingII}[3]{	\genfrac{\{}{\}}{0pt}{#1}{#2}{#3} }
\newcommand{\stirlingI}[2]{\genstirlingI{}{#1}{#2}}
\newcommand{\stirlingII}[2]{\genstirlingII{}{#1}{#2}}
\title{\textbf{The Fibonacci-Fubini and Lucas-Fubini numbers}}
\author{Yahia Djemmada$^1$,  
	Abdelghani Mehdaoui$^2$, 	
	L\'aszl\'o N\'emeth$^3$, and 
	L\'aszl\'o Szalay$^4$ \thanks{
		Y.~D.:  ORCID 0000-0002-9425-0216, 
		M.~A.: ORCID 0000-0002-8080-5719, 
		L.~N.: ORCID 0000-0001-9062-9280, 
		L.~Sz.: ORCID 0000-0002-4582-6100.}   
	}
\affil{$^{1,2}$National Higher School of Mathematics, P.O.Box 75, Mahelma 16093, Sidi Abdellah, Algiers, Algeria. \\$^{1,2}$USTHB,  Faculty of Mathematics, RECITS Laboratory BP 32, El Alia 16111 \\ Bab Ezzouar, Algiers, Algeria. \\
	{yahia.djemmada@nhsm.edu.dz} and {abdelghani.mehdaoui@nhsm.edu.dz} }
\affil{$^{3,4}$Institute of Basic Sciences, 
	University of Sopron \\Bajcsy-Zs. u. 4, H-9400, Sopron, Hungary \\ 
	{nemeth.laszlo@uni-sopron.hu} and {szalay.laszlo@uni-sopron.hu} }
\affil{$^4$Department of Mathematics, J. Selye University, Hradn\'a str.\  21, \\ 945 01 Kom\'arno, Slovakia }
\date{\today}
\begin{document}
	\definecolor{zzttqq}{rgb}{0.6,0.2,0.}
	\definecolor{xdxdff}{rgb}{0.49019607843137253,0.49019607843137253,1.}
	\definecolor{uuuuuu}{rgb}{0.26666666666666666,0.26666666666666666,0.26666666666666666}
	\maketitle

		
		\begin{abstract}
			Based on the combinatorial interpretation of the ordered Bell numbers, which count all the ordered partitions of the set $[n]=\{1,2,\dots,n\}$, we introduce the Fibonacci partition as a Fibonacci permutation of its blocks. Then we define the Fibonacci-Fubini numbers that count the total number of Fibonacci partitions of $[n]$.
			
			We study the classical properties of this sequence (generating function, explicit and Dobi\'nski-like formula,  etc.), we give combinatorial interpretation, and we extensively examine the Fibonacci-Fubini arithmetic triangle. We give some associate linear recurrence sequences, where in some sequences the Stirling numbers of the first and second kinds appear as well.
			\\[1mm]
			{	
				{\em Key Words: Ordered partitions, Fubini numbers, Fibonacci permutations, Lucas permutations, Stirling numbers. }\\
				{\em MSC 2020: 	05A15, 05A18, 11B39, 11B37, 11B73. }\\ 
			}
		\end{abstract}
		
		
		\section{Introduction}
		
		In enumerative combinatorics, a \textit{partition} of a set $[n] := \{1, 2, \dots ,n\}$ is a distribution of its elements into $k$ non-empty disjoint subsets $B_1|B_2|\dots|B_k$, called \textit{blocks}. (We separate the subsets by bars $"|"$.) We assume that the blocks are arranged in ascending order according to their minimum elements ($\min B_1<\min B_2<\cdots<\min B_k$).
		
		The Stirling numbers of the second kind, denoted by ${n \brace k}$, count the number of partitions of $[n]$ into exactly $k$ blocks \cite{Com}. Table~\ref{tab:Stirling} shows the first few items of the Stirling numbers of the second kind in triangular form.
		\begin{table}[H]
			\centering
			\begin{tabular}{c|cccccccc}
				\toprule
				${n \backslash k}$ & 0 & 1 & 2  & 3   & 4   & 5   & 6  & 7 \\
				\midrule
				0                  & 1 &  &   &   &    &    &   &  \\
				1                  & 0 & 1 &   &  & & && \\
				2                  & 0 & 1 & 1  &    &    &    &   &  \\
				3                  & 0 & 1 & 3  & 1   &    &    &   &  \\
				4                  & 0 & 1 & 7  & 6   & 1   &    &   &  \\
				5                  & 0 & 1 & 15 & 25  & 10  & 1   &   &  \\
				6                  & 0 & 1 & 31 & 90  & 65  & 15  & 1  & \\
				7                  & 0 & 1 & 63 & 301 & 350 & 140 & 21 & 1 \\
				
			\end{tabular}
			\caption{Stirling numbers of the second kind ${n \brace k}$.}
			\label{tab:Stirling}
		\end{table}
		The Stirling numbers of the second kind satisfy the recurrence relation
		\begin{equation}\label{StiR}
			{n\brace k}={n-1\brace k-1}+k{n-1\brace k}, \quad \text{for all } 1\leq k \leq n,
		\end{equation}
		with initial condition ${n\brace n}=1$ for $0\leq n$ and ${n\brace 0}={0\brace n}=0$ for $0<n $.
		Their explicit formula is given by
		\begin{align}\label{StiEx}
			{n \brace k}=\frac{1}{k!} \sum_{j=0}^{k}(-1)^{k-j}\binom{k}{j}j^n,
		\end{align}
		and their exponential generating function is
		\begin{equation}\label{eq:gen_Strirling}
			\sum_{n=k}^\infty{n \brace k}\frac{t^n}{n!}= \frac{(e^{t}-1)^k}{k!}.
		\end{equation}

		Expanding on this, authors often explore the Fubini numbers or ordered Bell numbers (A000670 in the OEIS\cite{oeis}), denoted as $w_n$, which signify the total number of partitions of $n$ distinguishable elements into different sizes of distinguishable blocks.
		Mathematically, this is expressed as
		\begin{equation*}
			w_n=\sum_{k=0}^{n}k!{n \brace k}, \quad \text{for all } 0 \leq n.
		\end{equation*}
		This formula is justified combinatorially by considering $k!$ distinguishable blocks due to their ordered nature, and ${n \brace k}$ denotes the number of partitions into exactly $k$ blocks. The summation over $k$ captures the total number of partitions of varying sizes.
		
		Regarding the notation, throughout this paper we represent the elements of the same block by adjacent numbers, and we separate the blocks by bars $"|"$.
		\begin{example}
			The partitions of the set $[3]=\{1,2,3\}$ are
			\[{123};\quad{1|23};\quad{12|3};\quad{13|2};\quad{1|2|3}.\]
			Thus $\stirlingII{3}{1}=1$, $\stirlingII{3}{2}=3$, $\stirlingII{3}{3}=1$, and its ordered partitions are the permutations of all the partitions above, $w_3=13$, as follows.
			\begin{equation*}
				\begin{aligned}
					{123};&&{1|23};&&{23|1};&&{12|3};&&{3|12};&&{13|2};&&{2|13};&&{1|2|3};&&{1|3|2};&&{2|1|3};&&{2|3|1};&&{3|1|2};&&{3|2|1}. 
				\end{aligned}
			\end{equation*}
		\end{example}

		Belbachir, Djemmada, and N\'emeth \cite{HacGjeNem} introduced a new type of ordered Bell numbers using the derangement numbers. Partly inspired by this, in this article, we define two new types of sequences, the Fibonacci-Fubini and Lucas-Fubini sequences, based on the Fubini sequence. We give their most relevant properties: the exponential generator functions, the explicit, Dobi\'nski-like, and derivative forms. Moreover, we give their combinatorial interpretations, and we extensively examine the Fibonacci-Fubini arithmetic triangle. We give some associate constant coefficient linear recurrence sequences, where in some sequences the Stirling numbers of the first kind appear as coefficients.

		\section{The Fibonacci-Fubini numbers}
		
		Let $S_n$ denote the set of all $n$-permutations of the set $[n]$.
		A permutation $\sigma \in S_n$, represented as $\sigma=\sigma_1\sigma_2\cdots\sigma_n$, is termed a {\it Fibonacci permutation} if it satisfies the condition $|\sigma_i-i|\leq 1$ for each element $i$, see \cite{Foundas,Fou}.
		In other words, each element $i$ in a Fibonacci permutation should either be in its original position or in a position adjacent to $i$ (either $i-1$ or $i+1$). For instance, the $4$-permutation $1243$ is a Fibonacci permutation, whereas $1423$ is not because $(\sigma_2-2>1)$.
		
		Foundas et al. \cite{Fou,Foundas} demonstrated that the count of Fibonacci permutations aligns with the well-known Fibonacci sequence. The authors established that the number of $n$-permutations of Fibonacci type is equal to $F_{n+1}$, where $F_n$ defined by recurrence $F_n=F_{n-1}+F_{n-2}$ for $2\leq n$, and with initial values $F_0=0$, $F_1=1$. Moreover, it is sequence {A000045} in OEIS \cite{oeis}.

		Additionally, the number of Fibonacci permutations can be expressed using Binet's formula given by
		\begin{equation}\label{FibB}
			F_n=\frac{\alpha^{n}-\beta^{n}}{\alpha-\beta},
		\end{equation}
		where $\alpha=(1+\sqrt{5})/2$, $\beta=(1-\sqrt{5})/2$, and $\alpha-\beta=\sqrt{5}$. The Binet's formula of the well-known Lucas sequence $L_n$ (defined by $L_n=L_{n-1}+L_{n-2}$ for $2\leq n$, $L_0=2$, $L_1=1$, and {A000032} in OEIS \cite{oeis}) is
		\begin{equation}\label{LucB}
			L_n=\alpha^{n}+\beta^{n}.
		\end{equation}
		
		In our current research, we focus on exploring the number of Fibonacci-ordered partitions. In other words, we examine the arrangements of blocks such that each block is either in its original position or in one of the positions of its neighbors.
		\begin{definition}
			A Fibonacci ordered partition $\pi_F$ of the set $[n]$ is a Fibonacci permutation of its blocks $B_1|B_2|\ldots|B_k$. We represent this as \[\pi_F([n])=B_{\sigma_1}|B_{\sigma_2}|\ldots|B_{\sigma_k},\]
			where $\sigma=\sigma_1\sigma_2\cdots\sigma_k$ is a Fibonacci permutation.
		\end{definition}
		Let Fibonacci-Fubini numbers be the total numbers of Fibonacci-ordered partitions, denoted $\mathcal{F}_n$.
		\begin{proposition}
			The Fibonacci-Fubini numbers $(\mathcal{F}_n)_{n\geq 0}$ are given by
			\begin{equation}\label{eq:Fibo-Stir}
				\mathcal{F}_n=\sum_{k=0}^n F_{k+1}{n \brace k}.
			\end{equation}
		\end{proposition}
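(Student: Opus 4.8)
The plan is to argue directly by a double-counting over the number of blocks, mirroring the combinatorial justification already given in the excerpt for the ordinary Fubini numbers $w_n=\sum_k k!\,{n\brace k}$. A Fibonacci-ordered partition of $[n]$ is, by definition, a pair consisting of an (unordered) set partition of $[n]$ into some number $k$ of blocks $B_1|B_2|\cdots|B_k$ — with the convention $\min B_1<\cdots<\min B_k$ fixing a canonical labelling of the blocks — together with a Fibonacci permutation $\sigma=\sigma_1\cdots\sigma_k\in S_k$ prescribing the order $B_{\sigma_1}|B_{\sigma_2}|\cdots|B_{\sigma_k}$ in which the blocks are written down. So I would partition the set of all Fibonacci-ordered partitions of $[n]$ according to the value of $k$, which ranges over $0\le k\le n$.

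For a fixed $k$, the first component (the underlying set partition into exactly $k$ blocks) can be chosen in ${n\brace k}$ ways, by the definition of the Stirling numbers of the second kind recalled in the introduction. Independently of that choice, the second component is an arbitrary Fibonacci permutation of the $k$ symbols indexing the blocks; by the theorem of Foundas et al.\ quoted above, the number of Fibonacci permutations in $S_k$ is exactly $F_{k+1}$. Since the two choices are independent, the number of Fibonacci-ordered partitions of $[n]$ with exactly $k$ blocks equals $F_{k+1}{n\brace k}$, and summing over $k$ yields $\mathcal{F}_n=\sum_{k=0}^{n}F_{k+1}{n\brace k}$, as claimed.

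There is essentially no hard step here; the only points that warrant a sentence of care are the boundary cases. For $k=0$ one has the empty partition of $[0]$ and $F_1=1$, consistent with the single empty ordered partition; for $k=1$ there is one block and $F_2=1$, the unique (identity) Fibonacci permutation on one symbol; and for $k=n$ (all singleton blocks) the count $F_{n+1}$ of Fibonacci permutations is the number of ways to order them, which is correct because reordering singleton blocks genuinely produces distinct ordered partitions. I would also remark explicitly that the minimum-element convention is what makes the decomposition into "set partition'' $\times$ "Fibonacci permutation of block-indices'' a bijection with no overcounting, since it gives each set partition a single canonical block-indexing before the permutation is applied.
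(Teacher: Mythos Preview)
Your argument is correct and is essentially the same as the paper's own proof: both count Fibonacci-ordered partitions by conditioning on the number $k$ of blocks, multiplying the ${n\brace k}$ choices of an (unordered) partition by the $F_{k+1}$ Fibonacci permutations of the block indices, and summing over $k$. Your additional remarks on the boundary cases and on the role of the minimum-element convention in making the decomposition bijective are accurate and slightly more detailed than what the paper writes, but the underlying idea is identical.
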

		\begin{proof}
			Using a combinatorial argument, the number of partitions having exactly $k$ blocks is ${n \brace k}$; the number of Fibonacci permutations of the blocks is $F_{k+1}$. Therefore, the number of Fibonacci ordered partitions is $F_{k+1}{n\brace k}$. Summing it over $k$ we count the total number of Fibonacci ordered partitions, which yields the result.
		\end{proof}
		
		Based on equality \eqref{eq:Fibo-Stir}, we define the Lucas-Fubini numbers $(\mathcal{L}_n)_{n\geq 0}$  by
		\begin{equation}\label{eq:Lucas-Stir}
			\mathcal{L}_n=\sum_{k=0}^n L_{k+1}{n \brace k}.
		\end{equation}
		
		Computing the first few item of the Fubini, Fibonacci-Fubini and Lucas-Fubini sequences we obtain 
		\begin{align*}
			(w_n)_{n\geq 0} =& (1, 1, 3, 13, 75, 541, 4683, 47293, 545835, 7087261, 102247563,  \ldots),\\
			(\mathcal{F}_n)_{n\geq 0} =& (1,1,3,10,38,164,791,4194,24138, 149423, 988033, 6939682, \ldots),\\
			(\mathcal{L}_n)_{n\geq 0} =& (1,3,7,22,84,366,1771,9388,53990,334093,2209039,15516716, \ldots).
		\end{align*}
		
		In the last section, we give a detailed combinatorial interpretation of the Fibonacci-Fubini number.

		\section{Some properties of Fibonacci-Fubini and Lucas-Fubini numbers}
		
		In this section, we give some fundamental properties of Fibonacci-Fubini and Lucas-Fubini numbers.
		
		\subsection{Explicit formulas}
		
		\begin{theorem}
			The explicit formula for the Fibonacci-Fubini numbers $\mathcal{F}_n$ is given by
			\begin{align*}
				\mathcal{F}_n = \frac{1}{\sqrt{5}} \sum_{k=0}^{n}\sum_{j=0}^{k}(-1)^{k-j}\binom{k}{j} \frac{\alpha^{k+1}-\beta^{k+1}}{k!}  j^n.
			\end{align*}
		\end{theorem}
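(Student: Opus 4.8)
The plan is to start from the defining relation \eqref{eq:Fibo-Stir}, namely $\mathcal{F}_n=\sum_{k=0}^n F_{k+1}{n\brace k}$, and simply substitute the two closed forms that are already available in the excerpt. First I would replace $F_{k+1}$ using Binet's formula \eqref{FibB}, writing $F_{k+1}=\tfrac{1}{\sqrt5}(\alpha^{k+1}-\beta^{k+1})$. Then I would replace the Stirling number ${n\brace k}$ by its explicit formula \eqref{StiEx}, namely ${n\brace k}=\tfrac{1}{k!}\sum_{j=0}^k(-1)^{k-j}\binom{k}{j}j^n$. Pulling the global constant $1/\sqrt5$ out of the $k$-sum and interchanging nothing (the sums are already nested in the right order), the product $F_{k+1}{n\brace k}$ becomes exactly $\tfrac{1}{\sqrt5}\sum_{j=0}^k(-1)^{k-j}\binom{k}{j}\tfrac{\alpha^{k+1}-\beta^{k+1}}{k!}j^n$, and summing over $k$ gives the claimed double sum.

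The only points that deserve a sentence of care are the boundary terms. At $k=0$ the inner Stirling formula contributes $\tfrac{1}{0!}(-1)^0\binom{0}{0}j^n\big|_{j=0}=0^n$, which is $1$ for $n=0$ and $0$ for $n\ge1$; this matches ${n\brace 0}$, so the term $F_1{n\brace 0}$ is handled correctly (it contributes $F_1=1$ only when $n=0$, consistent with $\mathcal{F}_0=1$). Likewise the convention $0^0=1$ must be in force, exactly as it implicitly is in \eqref{StiEx}. I would note that no convergence issue arises since every sum is finite.

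The main (and only) obstacle is essentially bookkeeping: making sure the factor $1/\sqrt5$ and the numerator $\alpha^{k+1}-\beta^{k+1}$ are associated with the correct index $k$ rather than $j$, and that the $k!$ in the Stirling formula lands inside the $k$-sum. There is no genuine analytic or combinatorial difficulty; the theorem is a direct consequence of combining \eqref{eq:Fibo-Stir}, \eqref{FibB}, and \eqref{StiEx}. I would close by remarking that the identical computation with $L_{k+1}=\alpha^{k+1}+\beta^{k+1}$ from \eqref{LucB} yields the analogous explicit formula
\begin{equation*}
	\mathcal{L}_n = \sum_{k=0}^{n}\sum_{j=0}^{k}(-1)^{k-j}\binom{k}{j}\,\frac{\alpha^{k+1}+\beta^{k+1}}{k!}\, j^n,
\end{equation*}
so the two cases need not be argued separately.
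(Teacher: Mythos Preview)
Your proposal is correct and follows exactly the same approach as the paper's own proof: substitute Binet's formula \eqref{FibB} for $F_{k+1}$ and the explicit formula \eqref{StiEx} for ${n\brace k}$ into the defining sum \eqref{eq:Fibo-Stir}, then factor out $1/\sqrt{5}$. Your additional remarks on the $k=0$ boundary term and the $0^0$ convention are more careful than what the paper records, but the argument is identical.
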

		\begin{proof} Using the explicit formula of Stirling numbers \eqref{StiEx} and the Binet formula \eqref{FibB}, we have 
			\begin{align*}
				\mathcal{F}_n=	\sum_{k=0}^{n}F_{k+1}\; {n \brace k}=&\sum_{k=0}^{n}\frac{\alpha^{k+1}-\beta^{k+1}}{\alpha - \beta }\cdot \frac{1}{k!}  \sum_{j=0}^{k}(-1)^{k-j}\binom{k}{j}j^n.
			\end{align*}
			Since $\alpha - \beta=\sqrt{5}$, we obtain the statement.
		\end{proof}
		Using the explicit formula of Stirling numbers \eqref{StiEx} and the Binet formula \eqref{LucB} one can prove similarly the following theorem.
		\begin{theorem}
			The explicit formula of the Lucas-Fubini numbers $\mathcal{L}_n$ is given by 
			\begin{align*}
				\mathcal{L}_n = \sum_{k=0}^{n}\sum_{j=0}^{k}(-1)^{k-j}\binom{k}{j}\frac{\alpha^{k+1}+\beta^{k+1}}{k!}  j^n.
			\end{align*}
		\end{theorem}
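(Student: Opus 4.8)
The plan is to follow verbatim the strategy used for the Fibonacci-Fubini theorem, since the Lucas version is structurally identical. Starting from the defining relation \eqref{eq:Lucas-Stir}, namely $\mathcal{L}_n=\sum_{k=0}^n L_{k+1}{n \brace k}$, I would substitute the explicit formula \eqref{StiEx} for the Stirling numbers of the second kind, giving
\begin{align*}
\mathcal{L}_n=\sum_{k=0}^{n}L_{k+1}\,{n \brace k}=\sum_{k=0}^{n}L_{k+1}\cdot\frac{1}{k!}\sum_{j=0}^{k}(-1)^{k-j}\binom{k}{j}j^n.
\end{align*}

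Next I would invoke the Binet-type formula \eqref{LucB}, which states $L_{k+1}=\alpha^{k+1}+\beta^{k+1}$, and insert it in place of $L_{k+1}$ in the double sum. Reordering the factors so that the Lucas term sits next to $1/k!$ inside the inner summation yields exactly
\begin{align*}
\mathcal{L}_n=\sum_{k=0}^{n}\sum_{j=0}^{k}(-1)^{k-j}\binom{k}{j}\frac{\alpha^{k+1}+\beta^{k+1}}{k!}\,j^n,
\end{align*}
which is the claimed identity. Unlike the Fibonacci case there is no division by $\alpha-\beta=\sqrt5$, so the prefactor $1/\sqrt5$ is absent — this is the only bookkeeping point to watch.

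Honestly, there is no real obstacle here: the proof is a one-line substitution, and the excerpt itself already signals this by saying ``one can prove similarly the following theorem.'' If I wanted to be more self-contained I would remark that all sums are finite (the inner sum terminates at $j=k$ and the outer at $k=n$), so no convergence or interchange-of-summation issues arise, and that the formula holds for all $n\geq 0$ with the usual conventions ${0\brace 0}=1$ and empty sums equal to zero. Thus the entire argument amounts to: apply \eqref{StiEx}, apply \eqref{LucB}, and collect terms.
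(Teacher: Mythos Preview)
Your proposal is correct and follows exactly the approach the paper indicates: substitute the explicit formula \eqref{StiEx} for ${n\brace k}$ and the Binet formula \eqref{LucB} for $L_{k+1}$ into the defining sum \eqref{eq:Lucas-Stir}, then rearrange. The paper itself gives no written proof beyond the remark ``one can prove similarly,'' so your write-up is in fact more detailed than what appears there.
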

		
		\subsection{Exponential generating functions}
		
		Let $\EuScript{F}(t)$ and $\EuScript{L}(t)$ be the exponential generator functions of Fibonacci-Fubini and Lucas-Fubini numbers, respectively. Then 
		\[\EuScript{F}(t)= \sum_{n= 0}^\infty \mathcal{F}_n \frac{t^n}{n!} \quad \text{ and }\quad \EuScript{L}(t)= \sum_{n= 0}^\infty \mathcal{L}_n \frac{t^n}{n!}.\] 
		\begin{theorem}\label{thm:FibSt}
			The exponential generating function of the sequence $\mathcal{F}_n$ is given by
			\begin{align*}
				\EuScript{F}(t)= \frac{1}{\sqrt{5}}\left(\alpha e^{\left(e^{t}-1\right)\alpha}-\beta e^{\left(e^{t}-1\right)\beta}\right).
			\end{align*}
		\end{theorem}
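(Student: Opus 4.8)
The plan is to start from the defining relation \eqref{eq:Fibo-Stir}, namely $\mathcal{F}_n=\sum_{k=0}^n F_{k+1}{n\brace k}$, multiply by $t^n/n!$, and sum over $n\ge 0$. Since ${n\brace k}=0$ whenever $k>n$, the double sum $\sum_{n\ge 0}\sum_{k=0}^n$ can be rewritten as $\sum_{k\ge 0}\sum_{n\ge k}$, so that
\[
\EuScript{F}(t)=\sum_{n=0}^\infty\left(\sum_{k=0}^n F_{k+1}{n\brace k}\right)\frac{t^n}{n!}=\sum_{k=0}^\infty F_{k+1}\sum_{n=k}^\infty {n\brace k}\frac{t^n}{n!}.
\]
First I would invoke the exponential generating function \eqref{eq:gen_Strirling} of the Stirling numbers of the second kind, which gives $\sum_{n\ge k}{n\brace k}t^n/n!=(e^t-1)^k/k!$. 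This collapses the inner sum and leaves
\[
\EuScript{F}(t)=\sum_{k=0}^\infty F_{k+1}\,\frac{(e^t-1)^k}{k!}.
\]

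Next I would substitute Binet's formula \eqref{FibB} in the shifted form $F_{k+1}=(\alpha^{k+1}-\beta^{k+1})/\sqrt5$ and split the sum according to the two terms:
\[
\EuScript{F}(t)=\frac{1}{\sqrt5}\left(\alpha\sum_{k=0}^\infty\frac{\bigl(\alpha(e^t-1)\bigr)^k}{k!}-\beta\sum_{k=0}^\infty\frac{\bigl(\beta(e^t-1)\bigr)^k}{k!}\right).
\]
Each of the two series is just the Taylor expansion of the exponential function evaluated at $\alpha(e^t-1)$ and $\beta(e^t-1)$ respectively, so they sum to $e^{\alpha(e^t-1)}$ and $e^{\beta(e^t-1)}$, yielding exactly the claimed closed form $\EuScript{F}(t)=\tfrac{1}{\sqrt5}\bigl(\alpha e^{(e^t-1)\alpha}-\beta e^{(e^t-1)\beta}\bigr)$.

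The only point needing a word of care is the interchange of the order of summation and the termwise evaluation of the series; this is harmless here because everything can be read as an identity of formal power series in $t$ (each coefficient of $t^n$ involves only finitely many $k$, namely $k\le n$), and, if an analytic statement is preferred, the series converge absolutely for all $t$ since $(e^t-1)^k/k!$ decays superexponentially in $k$ and $F_{k+1}$ grows only like $\alpha^{k+1}$. I do not expect any real obstacle: the argument is a direct composition of the Stirling EGF with Binet's formula. The same computation with $L_{k+1}=\alpha^{k+1}+\beta^{k+1}$ in place of $F_{k+1}$ would give the analogous statement $\EuScript{L}(t)=\alpha e^{(e^t-1)\alpha}+\beta e^{(e^t-1)\beta}$ for the Lucas-Fubini numbers.
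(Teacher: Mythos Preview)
Your proposal is correct and follows essentially the same approach as the paper: swap the order of summation using ${n\brace k}=0$ for $k>n$, insert the Stirling EGF \eqref{eq:gen_Strirling}, apply Binet's formula, and recognize the two exponential series. The only cosmetic difference is that the paper substitutes Binet's formula before interchanging sums rather than after, and it omits your remark on convergence.
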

		\begin{proof} Using the Binet formula \eqref{FibB} for the Fibonacci numbers, $\stirlingII{n}{k}=0$ for $n<k$,  and the generating function \eqref{eq:gen_Strirling}
			we have
			\begin{align*}
				\EuScript{F}(t) 
				=&\sum_{n\geq 0}\sum_{k=0}^{n}F_{k+1}\; {n \brace k}\frac{t^n}{n!} =\sum_{n\geq 0}\sum_{k=0}^{n}\frac{\alpha^{k+1}-\beta^{k+1}}{\alpha-\beta}{n \brace k} \frac{t^n}{n!}\\
				=&\frac{1}{\alpha-\beta}\sum_{k\geq 0}\left(\alpha^{k+1}\sum_{n\geq 0}{n \brace k}\frac{t^n}{n!} - \beta^{k+1}\sum_{n\geq 0}{n \brace k}\frac{t^n}{n!}\right)\\
				=&\frac{1}{\alpha-\beta}\sum_{k\geq 0}\left(\alpha^{k+1} \frac{(e^t-1)^k}{k!} - \beta^{k+1}\frac{(e^{t}-1)^k}{k!}\right)\\
				=&\frac{1}{\alpha-\beta}\left(\alpha \sum_{k\geq 0}\frac{\left(\alpha (e^t-1)\right)^k}{k!} - \beta \sum_{k\geq 0}\frac{\left(\beta (e^t-1)\right)^k}{k!}\right)\\
				=&\frac{1}{\sqrt{5}}\left(\alpha e^{\left(e^{t}-1\right)\alpha}-\beta e^{\left(e^{t}-1\right)\beta}\right).
			\end{align*}
		\end{proof}

		\begin{theorem}\label{thm:LucSt}
			The exponential generating function for the $\mathcal{L}_n$  numbers is given by
			\begin{align}
				\EuScript{L}(t)=  \alpha e^{\left(e^{t}-1\right)\alpha}+\beta e^{\left(e^{t}-1\right)\beta}.
			\end{align}
		\end{theorem}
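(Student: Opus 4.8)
The plan is to mirror the proof of Theorem~\ref{thm:FibSt} almost verbatim, replacing the Binet formula \eqref{FibB} for $F_n$ by the Binet formula \eqref{LucB} for $L_n$. Concretely, I would start from the defining sum \eqref{eq:Lucas-Stir}, write
\[
\EuScript{L}(t)=\sum_{n\geq 0}\sum_{k=0}^{n}L_{k+1}\,{n\brace k}\frac{t^n}{n!},
\]
and substitute $L_{k+1}=\alpha^{k+1}+\beta^{k+1}$. Using ${n\brace k}=0$ for $n<k$ to free the inner summation range, I would interchange the order of summation to obtain
\[
\EuScript{L}(t)=\sum_{k\geq 0}\alpha^{k+1}\sum_{n\geq 0}{n\brace k}\frac{t^n}{n!}+\sum_{k\geq 0}\beta^{k+1}\sum_{n\geq 0}{n\brace k}\frac{t^n}{n!}.
\]

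Next I would invoke the exponential generating function \eqref{eq:gen_Strirling}, namely $\sum_{n\geq 0}{n\brace k}t^n/n!=(e^t-1)^k/k!$, to turn each inner sum into $(e^t-1)^k/k!$. Pulling out one factor of $\alpha$ (resp.\ $\beta$) then leaves
\[
\EuScript{L}(t)=\alpha\sum_{k\geq 0}\frac{\bigl(\alpha(e^t-1)\bigr)^k}{k!}+\beta\sum_{k\geq 0}\frac{\bigl(\beta(e^t-1)\bigr)^k}{k!},
\]
and recognizing each series as the exponential function $e^{x}$ with $x=\alpha(e^t-1)$ and $x=\beta(e^t-1)$ respectively yields the claimed closed form $\alpha e^{(e^t-1)\alpha}+\beta e^{(e^t-1)\beta}$.

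I do not expect any genuine obstacle here: the only subtlety — as in the Fibonacci case — is the legitimacy of interchanging the two summations, which is justified because the relevant double series converges absolutely for $t$ in a neighborhood of $0$ (the $(e^t-1)^k/k!$ terms decay super-exponentially in $k$), and the whole computation is a purely formal manipulation of exponential generating functions otherwise. The contrast with the Fibonacci case is merely the plus sign replacing the minus sign and the absence of the $1/\sqrt5=1/(\alpha-\beta)$ normalizing factor, since $L_n=\alpha^n+\beta^n$ carries no denominator. Hence the proof can simply be stated as "one repeats the argument of Theorem~\ref{thm:FibSt} with \eqref{LucB} in place of \eqref{FibB}," optionally displaying the chain of equalities above for completeness.
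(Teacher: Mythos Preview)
Your proposal is correct and follows essentially the same approach as the paper's own proof: start from the defining sum, substitute the Lucas Binet formula $L_{k+1}=\alpha^{k+1}+\beta^{k+1}$, interchange the order of summation using ${n\brace k}=0$ for $n<k$, apply the Stirling EGF \eqref{eq:gen_Strirling}, and recognize the resulting series as exponentials. The paper does not explicitly discuss the convergence justification you mention, so your write-up is in fact slightly more detailed than theirs.
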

		\begin{proof} Using the Binet formula \eqref{LucB} for the Lucas numbers, we have
			\begin{align*}
				\EuScript{L}(t)
				=&	\sum_{n\geq 0}\sum_{k=0}^{n}L_{k+1}\; {n \brace k}\frac{t^n}{n!} =\sum_{n\geq 0}\sum_{k=0}^{n}\left(\alpha^{k+1}+\beta^{k+1}\right){n \brace k} \frac{t^n}{n!}\\
				=&\sum_{k\geq 0}\left(\alpha^{k+1}\sum_{n\geq 0}{n \brace k}\frac{t^n}{n!} + \beta^{k+1}\sum_{n\geq 0}{n \brace k}\frac{t^n}{n!}\right)\\
				=&\sum_{k\geq 0}\left(\alpha^{k+1} \frac{(e^t-1)^k}{k!} + \beta^{k+1}\frac{(e^{t}-1)^k}{k!}\right)\\
				=&\alpha \sum_{k\geq 0}\frac{\left(\alpha (e^t-1)\right)^k}{k!} + \beta \sum_{k\geq 0}\frac{\left(\beta (e^t-1)\right)^k}{k!}\\
				=&\alpha e^{\left(e^{t}-1\right)\alpha}+\beta e^{\left(e^{t}-1\right)\beta}.
			\end{align*}
		\end{proof}

		\subsection{Dobi\'nski-like formulas}
		In this subsection, we give the Dobi\'nski-like formulas of the Fibonacci-Fubini and Lucas-Fubini numbers. Dobi\'nski \cite{Dob} gave  the useful formula $(1/e)\sum_{k=0}^{\infty} (k^n/k!)$ for Bell numbers.

		\begin{theorem} For all $n\geq 0$ the Dobi\'nski-like formula of Fibonacci-Fubini numbers is  
			\begin{align*}
				\mathcal{F}_n=\frac{1}{\sqrt{5}}\left(\frac{\alpha}{e^{\alpha}}\sum_{k\geq 0} \frac{\alpha^k k^n}{k!}-\frac{\beta}{e^{\beta}}\sum_{k\geq 0} \frac{\beta^k k^n}{k!}\right).
			\end{align*}
		\end{theorem}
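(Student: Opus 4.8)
The plan is to read the identity off the closed form of the exponential generating function established in Theorem~\ref{thm:FibSt}, namely $\EuScript{F}(t)=\frac{1}{\sqrt5}\bigl(\alpha e^{(e^t-1)\alpha}-\beta e^{(e^t-1)\beta}\bigr)$, by re-expanding each exponential as a double power series in $t$ and picking off the coefficient of $t^n/n!$.

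Concretely, for $x\in\{\alpha,\beta\}$ I would write $e^{(e^t-1)x}=e^{-x}e^{xe^t}=e^{-x}\sum_{k\ge 0}\frac{x^k}{k!}e^{kt}$, then substitute $e^{kt}=\sum_{n\ge 0}\frac{k^n}{n!}t^n$ and interchange the two summations to obtain $e^{(e^t-1)x}=\sum_{n\ge 0}\bigl(e^{-x}\sum_{k\ge 0}\frac{x^k k^n}{k!}\bigr)\frac{t^n}{n!}$. Hence the coefficient of $t^n/n!$ in $x\,e^{(e^t-1)x}$ is $\frac{x}{e^{x}}\sum_{k\ge 0}\frac{x^k k^n}{k!}$. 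Applying this with $x=\alpha$ and with $x=\beta$, subtracting, and dividing by $\sqrt5$ reproduces exactly $\mathcal{F}_n=\frac{1}{\sqrt5}\bigl(\frac{\alpha}{e^{\alpha}}\sum_{k\ge 0}\frac{\alpha^k k^n}{k!}-\frac{\beta}{e^{\beta}}\sum_{k\ge 0}\frac{\beta^k k^n}{k!}\bigr)$.

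The step that actually needs justification — and the only real obstacle — is the legitimacy of interchanging the infinite sums, in particular the convergence of the Dobi\'nski-type series $\sum_{k\ge 0}x^k k^n/k!$ when $x=\beta<0$. This is handled by absolute convergence: for every real $x$ and every fixed $n$ the series $\sum_{k\ge 0}|x|^k k^n/k!$ converges (ratio test, since $|x|k^n/((k+1)(k+1)^{-n}\cdots)\to 0$; equivalently $k^n/k!$ decays super-geometrically), and the double series in $t$ converges absolutely for $t$ in a neighbourhood of $0$; so Fubini's theorem for series applies and all the rearrangements above are valid. The restriction $n\ge 0$ in the statement is automatic since nothing else is assumed.

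As an alternative I would mention the direct route from \eqref{eq:Fibo-Stir}: substituting Binet's formula \eqref{FibB} gives $\mathcal{F}_n=\frac{1}{\sqrt5}\bigl(\alpha\sum_{k}{n\brace k}\alpha^k-\beta\sum_{k}{n\brace k}\beta^k\bigr)$, after which one invokes the classical identity $\sum_{k=0}^n{n\brace k}x^k=e^{-x}\sum_{j\ge 0}\frac{x^j j^n}{j!}$ (provable by expanding the right-hand side with \eqref{StiEx}, or extracting the coefficient of $t^n/n!$ from \eqref{eq:gen_Strirling}) at $x=\alpha$ and $x=\beta$. Either way one lands on the claimed formula; I would present the generating-function derivation, since Theorem~\ref{thm:FibSt} is already in hand and makes the argument a one-line coefficient extraction.
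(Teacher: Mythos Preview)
Your proposal is correct and follows essentially the same route as the paper: start from the exponential generating function of Theorem~\ref{thm:FibSt}, rewrite $e^{(e^t-1)x}=e^{-x}\sum_{k\ge0}\frac{x^k}{k!}e^{kt}$, expand $e^{kt}$ as a power series in $t$, swap the two sums, and identify the coefficient of $t^n/n!$. The only difference is that you explicitly justify the interchange of summations via absolute convergence, which the paper leaves implicit.
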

		\begin{proof}
			From Theorem \ref{thm:FibSt}, we have		
			\begin{align*}
				\sum_{n\geq 0}\mathcal{F}_n\frac{t^n}{n!}=&\frac{1}{\sqrt{5}}\left(\alpha e^{\left(e^{t}-1\right)\alpha}-\beta e^{\left(e^{t}-1\right)\beta}\right)\\
				=& \frac{1}{\sqrt{5}}\left(\alpha e^{\alpha e^{t}}e^{-\alpha}-\beta e^{\beta e^{t}}e^{-\beta} \right)\\
				=& \frac{1}{\sqrt{5}}\left(\frac{\alpha}{e^{\alpha}}\sum_{k\geq 0}\frac{\alpha^k e^{kt}}{k!}-\frac{\beta}{e^{\beta}}\sum_{k\geq 0}\frac{\beta^k e^{kt}}{k!}\right)\\
				=&\frac{1}{\sqrt{5}}\left(\frac{\alpha}{e^{\alpha}}\sum_{k\geq 0}\frac{\alpha^k}{k!}\sum_{n\geq 0}\frac{k^nt^n}{n!}-\frac{\beta}{e^{\beta}}\sum_{k\geq 0}\frac{\beta^k}{k!}\sum_{n\geq 0}\frac{k^nt^n}{n!}\right)\\
				=& \frac{1}{\sqrt{5}}\left(\frac{\alpha}{e^{\alpha}}\sum_{n\geq 0}\sum_{k\geq 0}\frac{\alpha^k k^n}{k!}\frac{t^n}{n!}-\frac{\beta}{e^{\beta}}\sum_{n\geq 0}\sum_{k\geq 0}\frac{\beta^k k^n}{k!}\frac{t^n}{n!}\right)\\
				=&\sum_{n\geq 0} \frac{1}{\sqrt{5}}\left(\frac{\alpha}{e^{\alpha}}\sum_{k\geq 0}\frac{\alpha^k k^n}{k!}-\frac{\beta}{e^{\beta}}\sum_{k\geq 0}\frac{\beta^k k^n}{k!}\right)\frac{t^n}{n!}.
			\end{align*}
			By comparing the coefficients of ${t^n}/{n!}$ on both sides we get the result.
		\end{proof}
		\begin{theorem}
			The Dobinski-like formula for the Lucas-Fubini numbers is 
			\begin{align}
				\mathcal{L}_n=\frac{\alpha}{e^{\alpha}}\sum_{k\geq 0} \frac{\alpha^k k^n}{k!}+\frac{\beta}{e^{\beta}}\sum_{k\geq 0} \frac{\beta^k k^n}{k!}.
			\end{align}
		\end{theorem}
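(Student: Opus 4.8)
The plan is to reproduce, almost verbatim, the computation that established the Dobi\'nski-like formula for $\mathcal{F}_n$, now starting from the closed form of $\EuScript{L}(t)$ given in Theorem~\ref{thm:LucSt}. First I would rewrite the two exponentials appearing there by separating the constant part of the argument: $e^{(e^{t}-1)\alpha}=e^{-\alpha}e^{\alpha e^{t}}$ and $e^{(e^{t}-1)\beta}=e^{-\beta}e^{\beta e^{t}}$, so that $\EuScript{L}(t)=\frac{\alpha}{e^{\alpha}}e^{\alpha e^{t}}+\frac{\beta}{e^{\beta}}e^{\beta e^{t}}$.

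Next I would expand each factor $e^{\alpha e^{t}}$ as a power series in $e^{t}$, namely $e^{\alpha e^{t}}=\sum_{k\geq 0}\frac{\alpha^{k}e^{kt}}{k!}$ (and analogously with $\beta$), and then expand $e^{kt}=\sum_{n\geq 0}\frac{k^{n}t^{n}}{n!}$. After interchanging the order of the summations over $k$ and $n$, the right-hand side becomes $\sum_{n\geq 0}\big(\frac{\alpha}{e^{\alpha}}\sum_{k\geq 0}\frac{\alpha^{k}k^{n}}{k!}+\frac{\beta}{e^{\beta}}\sum_{k\geq 0}\frac{\beta^{k}k^{n}}{k!}\big)\frac{t^{n}}{n!}$. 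Since by definition $\EuScript{L}(t)=\sum_{n\geq 0}\mathcal{L}_n\frac{t^{n}}{n!}$, comparing the coefficients of $t^{n}/n!$ on the two sides yields the claimed identity.

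The only step that requires a word of justification is the rearrangement of the double sum. For every fixed real $t$ the series of absolute values $\sum_{k,n\geq 0}\frac{|\alpha|^{k}k^{n}|t|^{n}}{k!\,n!}=e^{|\alpha|e^{|t|}}<\infty$ converges (and likewise with $|\beta|$ in place of $|\alpha|$; note moreover that $|\beta|<1$), so Fubini's theorem legitimizes swapping the order of summation, and the formal manipulations above are valid identities of convergent power series. Everything else is the same routine bookkeeping as in the Fibonacci-Fubini case, simply with the sign ``$-$'' replaced by ``$+$'' and the normalizing factor $1/\sqrt{5}$ dropped, so I do not expect any genuine obstacle: the argument is a direct transcription of the preceding proof.
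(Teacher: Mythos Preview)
Your proposal is correct and follows essentially the same route as the paper's own proof: start from the closed form of $\EuScript{L}(t)$ in Theorem~\ref{thm:LucSt}, split $e^{(e^{t}-1)\gamma}=e^{-\gamma}e^{\gamma e^{t}}$ for $\gamma\in\{\alpha,\beta\}$, expand first in $e^{t}$ and then each $e^{kt}$ in $t$, swap the sums, and compare coefficients of $t^{n}/n!$. The only difference is that you add an absolute-convergence justification for the interchange of sums, which the paper omits.
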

		\begin{proof}
			Similarly, from Theorem \ref{thm:LucSt}, we have		
			\begin{align*}
				\sum_{n\geq 0}\mathcal{L}_n\frac{t^n}{n!} =&  \alpha e^{\left(e^{t}-1\right)\alpha}+\beta e^{\left(e^{t}-1\right)\beta}\\
				=& \alpha e^{\alpha e^{t}}e^{-\alpha}+\beta e^{\beta e^{t}}e^{-\beta} \\
				=& \frac{\alpha}{e^{\alpha}}\sum_{k\geq 0}\frac{\alpha^k e^{kt}}{k!}+\frac{\beta}{e^{\beta}}\sum_{k\geq 0}\frac{\beta^k e^{kt}}{k!}\\
				=&\frac{\alpha}{e^{\alpha}}\sum_{k\geq 0}\frac{\alpha^k}{k!}\sum_{n\geq 0}\frac{k^nt^n}{n!}+\frac{\beta}{e^{\beta}}\sum_{k\geq 0}\frac{\beta^k}{k!}\sum_{n\geq 0}\frac{k^nt^n}{n!}\\
				=& \frac{\alpha}{e^{\alpha}}\sum_{n\geq 0}\sum_{k\geq 0}\frac{\alpha^k k^n}{k!}\frac{t^n}{n!}+\frac{\beta}{e^{\beta}}\sum_{n\geq 0}\sum_{k\geq 0}\frac{\beta^k k^n}{k!}\frac{t^n}{n!}\\
				=&\sum_{n\geq 0}\left( \frac{\alpha}{e^{\alpha}}\sum_{k\geq 0}\frac{\alpha^k k^n}{k!}+\frac{\beta}{e^{\beta}}\sum_{k\geq 0}\frac{\beta^k k^n}{k!}\right)\frac{t^n}{n!}.
			\end{align*}
			The comparison of the coefficients of ${t^n}/{n!}$ on the two sides implies the result.
		\end{proof}

		\subsection{Higher order derivatives}
		
		In this subsection, derivative of order $r$ of exponential generating function of the Fibonacci-Fubini and Lucas-Fubini numbers.

		\begin{theorem} \label{th:deivate_Fibo} The $r$th higher derivative of the exponential generating function of the Fibonacci-Fubini numbers is given by 
			\begin{align*}
				\EuScript{F}^{(r)}(t)= \frac{1}{\sqrt{5}} \sum_{i=1}^{r}e^{it}{r \brace i} \left(\alpha^{i+1} e^{\alpha (e^t-1)}-\beta^{i+1} e^{\beta (e^t-1)}\right).
			\end{align*}
		\end{theorem}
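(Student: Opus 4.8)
The plan is to reduce the statement to a clean lemma about repeated differentiation of a single composite exponential and then apply it with the two parameters $\alpha$ and $\beta$. Set $g_c(t):=e^{c(e^t-1)}$ for a constant $c$, so that by Theorem~\ref{thm:FibSt} one has $\EuScript{F}(t)=\frac{1}{\sqrt5}\bigl(\alpha\,g_\alpha(t)-\beta\,g_\beta(t)\bigr)$. The chain rule gives the elementary identity $g_c'(t)=c\,e^t\,g_c(t)$, which is the only analytic input needed; everything else is bookkeeping with the Stirling recurrence \eqref{StiR}.

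The core claim is that for every $r\ge 1$,
\[
g_c^{(r)}(t)=g_c(t)\sum_{i=1}^{r}\stirlingII{r}{i}c^{i}e^{it},
\]
which I would prove by induction on $r$. The base case $r=1$ is exactly $g_c'(t)=c\,e^t g_c(t)$ since $\stirlingII{1}{1}=1$. For the inductive step I differentiate the formula for $g_c^{(r-1)}(t)$, apply the product rule, and substitute $g_c'(t)=c\,e^t g_c(t)$; this produces $g_c(t)$ times $\sum_{i=1}^{r-1}\stirlingII{r-1}{i}c^{i+1}e^{(i+1)t}+\sum_{i=1}^{r-1}i\,\stirlingII{r-1}{i}c^{i}e^{it}$. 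Shifting the index $i\mapsto i-1$ in the first sum and merging, the coefficient of $c^{i}e^{it}$ becomes $\stirlingII{r-1}{i-1}+i\,\stirlingII{r-1}{i}=\stirlingII{r}{i}$ by \eqref{StiR}, and the new summation range $1\le i\le r$ is legitimate because the boundary values $\stirlingII{r-1}{0}=0$ and $\stirlingII{r-1}{r}=0$ exactly supply (or kill) the terms $i=1$ and $i=r$.

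With the lemma in hand, I differentiate $\EuScript{F}(t)=\frac{1}{\sqrt5}\bigl(\alpha\,g_\alpha(t)-\beta\,g_\beta(t)\bigr)$ termwise $r$ times and substitute, obtaining
\[
\EuScript{F}^{(r)}(t)=\frac{1}{\sqrt5}\left(\alpha\,g_\alpha(t)\sum_{i=1}^{r}\stirlingII{r}{i}\alpha^{i}e^{it}-\beta\,g_\beta(t)\sum_{i=1}^{r}\stirlingII{r}{i}\beta^{i}e^{it}\right),
\]
and pulling $\sum_{i=1}^{r}e^{it}\stirlingII{r}{i}$ out front (with $g_\alpha(t)=e^{\alpha(e^t-1)}$, $g_\beta(t)=e^{\beta(e^t-1)}$) gives precisely the claimed expression. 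The argument is entirely routine; the only point that requires a moment of care is the index shift in the inductive step together with the two boundary conventions for Stirling numbers, which is what keeps the summation range at $1\le i\le r$. (As a remark, the same lemma also follows from the Touchard/Bell-polynomial expansion $e^{c(e^t-1)}=\sum_{n\ge0}\bigl(\sum_i\stirlingII{n}{i}c^i\bigr)t^n/n!$, but the direct induction above is shorter and self-contained.)
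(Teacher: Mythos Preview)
Your proof is correct and follows essentially the same inductive argument as the paper's own proof, using the Stirling recurrence \eqref{StiR} at the key step. The only difference is organizational: you isolate the computation into a lemma for $g_c^{(r)}(t)$ with a generic parameter $c$ and then apply it with $c=\alpha$ and $c=\beta$, whereas the paper carries both the $\alpha$- and $\beta$-terms through the induction simultaneously; your factoring is cleaner and immediately yields the Lucas analogue (Theorem~8) as well.
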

		\begin{proof}We prove the theorem using induction on $r$.
			For $r=1$, \[
			\EuScript{F}^{(1)}(t)= \frac{1}{\sqrt{5}} e^{t}\left(\alpha^{2} e^{\left(e^{t}-1\right) \alpha} - \beta^{2} e^{\left(e^{t}-1\right) \beta}\right)=\EuScript{F}'(t).
			\]
			We suppose the equation is true for $r-1$ and show it holds for $r$.
			\[
			\begin{aligned}
				\left(\EuScript{F}^{(r-1)}(t)\right)^{\prime} & =\left( \frac{1}{\sqrt{5}} \sum_{i=1}^{r-1} e^{i t}{r-1 \brace i}\left(\alpha^{i+1} e^{\left(e^{t}-1\right) \alpha}- \beta^{i+1} e^{\left(e^{t}-1\right) \beta}\right)\right)^{\prime} \\
				\EuScript{F}^{(r)}(t)           & = \frac{1}{\sqrt{5}}\sum_{i=1}^{r-1}{r-1 \brace i}\left(\alpha^ { i + 1 } e^{e^{(t-1) \alpha}+i t}- \beta^{i+1}e^{e^{(t-1) \beta}+i t}\right)^{\prime}       \\
				& = \frac{1}{\sqrt{5}} \sum_{i=1}^{r-1}{r-1 \brace i}\left(\left(\alpha e^{t}+i\right) \alpha^{i+1} e^{\left(e^{t}-1\right) \alpha+i t}- \left(\beta e^{t}+i\right) \beta^{i+1} e^{\left(e^{t}-1\right) \beta+i t}\right)\\
				&=\frac{1}{\sqrt{5}} \left(\sum_{i=1}^{r-1}{r-1 \brace i}\left(e^{(i+1)t} \alpha^{i+2} e^{\left(e^{t}-1\right) \alpha}- e^{(i+1)t} \beta^{i+2} e^{\left(e^{t}-1\right) \beta}\right)\right.\\
				&\hspace*{1cm}+\left.\sum_{i=1}^{r-1}i{r-1 \brace i}e^{it} \left(\alpha^{i+1} e^{\left(e^{t}-1\right) \alpha}- \beta^{i+1} e^{\left(e^{t}-1\right) \beta}\right)\right)\\
			\end{aligned}\]	\[ \begin{aligned}
				&= \frac{1}{\sqrt{5}}\left(\sum_{i=2}^{r}{r-1 \brace i-1}\left(e^{it} \alpha^{i+1} e^{\left(e^{t}-1\right) \alpha}- e^{it} \beta^{i+1} e^{\left(e^{t}-1\right) \beta}\right)\right.\\&
				\hspace*{1cm}+\left.\sum_{i=1}^{r-1}i{r-1 \brace i}e^{it} \left(\alpha^{i+1} e^{\left(e^{t}-1\right) \alpha}- \beta^{i+1} e^{\left(e^{t}-1\right) \beta}\right)\right) \\
				&=\frac{1}{\sqrt{5}}\sum_{i=1}^{r}\left({r-1 \brace i-1}+i{r-1 \brace i}\right)\left(e^{it} \left(\alpha^{i+1} e^{\left(e^{t}-1\right) \alpha}- \beta^{i+1} e^{\left(e^{t}-1\right) \beta}\right)\right)\\
				&=\frac{1}{\sqrt{5}} \sum_{i=1}^{r}{r \brace i}e^{it} \left(\alpha^{i+1} e^{\left(e^{t}-1\right) \alpha}- \beta^{i+1} e^{\left(e^{t}-1\right) \beta}\right).
			\end{aligned}
			\]
			Recall that $\stirlingII{r-1}{i-1}=0$ when $i=1$.
		\end{proof}
		
		\begin{theorem} The $r$th higher derivative of the exponential generating function of the Lucas-Fubini numbers is given by 
			\begin{align*}
				\EuScript{L}^{(r)}(t)=\sum_{i=1}^{r}e^{it}{r \brace i} \left(\alpha^{i+1} e^{\alpha (e^t-1)}+\beta^{i+1} e^{\beta (e^t-1)}\right).
			\end{align*}
		\end{theorem}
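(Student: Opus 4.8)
The plan is to prove this in exact parallel with Theorem~\ref{th:deivate_Fibo}, by induction on $r$. Both generating functions are built from the same two atoms: writing $g_\lambda(t):=\lambda\,e^{(e^t-1)\lambda}$, Theorems~\ref{thm:FibSt} and~\ref{thm:LucSt} say $\EuScript{F}(t)=\tfrac{1}{\sqrt5}\bigl(g_\alpha(t)-g_\beta(t)\bigr)$ and $\EuScript{L}(t)=g_\alpha(t)+g_\beta(t)$. Since differentiation is linear and the sign in front of $g_\beta$ never interacts with the combinatorial manipulation, the computation in the proof of Theorem~\ref{th:deivate_Fibo} carries over with every occurrence of $\tfrac{1}{\sqrt5}(\,\cdot_\alpha-\,\cdot_\beta)$ replaced by $(\,\cdot_\alpha+\,\cdot_\beta)$.

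Concretely, for the base case $r=1$ one differentiates the closed form of Theorem~\ref{thm:LucSt} using $\frac{d}{dt}e^{(e^t-1)\lambda}=\lambda e^t e^{(e^t-1)\lambda}$, obtaining $\EuScript{L}'(t)=e^t\bigl(\alpha^2 e^{(e^t-1)\alpha}+\beta^2 e^{(e^t-1)\beta}\bigr)$, which is the claimed formula at $r=1$ since $\stirlingII{1}{1}=1$. For the inductive step, assume the formula for $r-1$ and differentiate termwise: the summand $\stirlingII{r-1}{i}e^{it}\alpha^{i+1}e^{(e^t-1)\alpha}$ yields $\bigl(i+\alpha e^t\bigr)\stirlingII{r-1}{i}e^{it}\alpha^{i+1}e^{(e^t-1)\alpha}$, that is, $i\stirlingII{r-1}{i}e^{it}\alpha^{i+1}e^{(e^t-1)\alpha}+\stirlingII{r-1}{i}e^{(i+1)t}\alpha^{i+2}e^{(e^t-1)\alpha}$, and likewise for the $\beta$-term with the $+$ sign preserved. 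Re-indexing $i\mapsto i-1$ in the second family and combining, the coefficient of $e^{it}\bigl(\alpha^{i+1}e^{(e^t-1)\alpha}+\beta^{i+1}e^{(e^t-1)\beta}\bigr)$ becomes $\stirlingII{r-1}{i-1}+i\stirlingII{r-1}{i}$, which is $\stirlingII{r}{i}$ by the recurrence~\eqref{StiR}; the index ranges match after using $\stirlingII{r-1}{0}=0$ (to extend the shifted sum down to $i=1$) and $\stirlingII{r-1}{r}=0$ (to extend the unshifted sum up to $i=r$), giving exactly $\EuScript{L}^{(r)}(t)=\sum_{i=1}^{r}\stirlingII{r}{i}e^{it}\bigl(\alpha^{i+1}e^{(e^t-1)\alpha}+\beta^{i+1}e^{(e^t-1)\beta}\bigr)$.

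A slightly cleaner alternative is to isolate a single exponential: since $g_\lambda(t)=\lambda e^{-\lambda}e^{\lambda e^t}$, the classical Bell/Touchard identity $\frac{d^r}{dt^r}e^{\lambda e^t}=e^{\lambda e^t}\sum_{i=1}^{r}\stirlingII{r}{i}\lambda^i e^{it}$ — itself proved by the one-line induction verifying that $c_{r,i}=\stirlingII{r}{i}\lambda^i$ satisfies $c_{r+1,i}=\lambda c_{r,i-1}+i\,c_{r,i}$ — gives $g_\lambda^{(r)}(t)=\sum_{i=1}^{r}\stirlingII{r}{i}\lambda^{i+1}e^{it}e^{(e^t-1)\lambda}$, and summing over $\lambda\in\{\alpha,\beta\}$ is the theorem. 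Either way there is no real obstacle: the content is entirely the Stirling recurrence~\eqref{StiR}, and the only thing requiring care is the index-shifting bookkeeping together with the boundary conventions $\stirlingII{m}{0}=\stirlingII{m}{m+1}=0$ — all of which the Fibonacci case in Theorem~\ref{th:deivate_Fibo} already illustrates.
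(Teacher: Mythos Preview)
Your proof is correct and follows exactly what the paper intends: the paper explicitly omits the argument, stating only that it is similar to the proof of Theorem~\ref{th:deivate_Fibo}, and your induction on $r$ reproduces that computation line for line with the sign in front of the $\beta$-term flipped and the factor $1/\sqrt{5}$ dropped. Your alternative route via the Touchard-type identity $\frac{d^r}{dt^r}e^{\lambda e^t}=e^{\lambda e^t}\sum_{i=1}^{r}\stirlingII{r}{i}\lambda^i e^{it}$ is a nice packaging that isolates the combinatorial content (the Stirling recurrence) from the specific linear combination of $\alpha$- and $\beta$-exponentials, but it is not a genuinely different argument---the induction behind that identity is the same one the paper carries out.
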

		Since the proof is similar to the proof of Theorem \ref{th:deivate_Fibo},  we leave it to the reader.

		\section{Fibonacci-Fubini triangle}

		In this section, we examine the so-called Fibonacci-Fubini triangle $\T$ with terms $\Tt{n}{k}=F_{k+1}\stirling{n}{k}$  for $1\leq n$ and $1\leq k \leq n$, when we arrange the terms into a Pascal's-like arithmetical triangle. Table~\ref{tab:Triangle_FS} shows its first few rows with rows index $n$. (In an extended version, let the terms $\Tt{n}{0}$ for all $n\ge1$ be zeros, and $\Tt{0}{0}$=1.)
		Obviously	
		\[  \mathcal{F}_n=\sum_{k=0}^n \Tt{n}{k},
		\]
		where $(\mathcal{F}_n)$ is the Fibonacci-Fubini sequence and the sum of rows sequence of the triangle $\T$ as well.
		
		\begin{table}[!ht]
			\centering
			\scalebox{0.99}{ \begin{tikzpicture}[->,xscale=1.2,yscale=0.6, auto,swap]
					\node(a00) at (0,0)    {1};
					
					\node (a01) at (-0.5,-1)   {1};
					\node (a02) at (0.5,-1)   {2};
					
					\node (a01) at (-1,-2)   {1};
					\node (a02) at (0,-2)   {6};	
					\node (a01) at (1,-2)   {3};
					
					\node (a02) at (-1.5,-3)   {1};		
					\node (a01) at (-0.5,-3)   {14};
					\node (a01) at (0.5,-3)   {18};
					\node (a02) at (1.5,-3)   {5};	
					
					\node (a01) at (-2,-4)   {1};
					\node (a01) at (-1,-4)   {30};
					\node (a01) at (0,-4)   {75};
					\node (a02) at (1,-4)   {50};
					\node (a02) at (2,-4)   {8};
					
					\node (a01) at (-2.5,-5)   {1};		
					\node (a01) at (-1.5,-5)   {62};
					\node (a02) at (-0.5,-5)   {270};
					\node (a02) at (0.5,-5)   {325};
					\node (a01) at (1.5,-5)   {120};
					\node (a01) at (2.5,-5)   {13};
					
					\node (a01) at (-3,-6)   {1};
					\node (a02) at (-2,-6)   {126};
					\node (a02) at (-1,-6)   {903};
					\node (a01) at (0,-6)   {1750};
					\node (a01) at (1,-6)   {1120};
					\node (a02) at (2,-6)   {273};
					\node (a01) at (3,-6)   {21};
					
					\node (a01) at (-3.5,-7)   {1};
					\node (a01) at (-2.5,-7)   {254};		
					\node (a01) at (-1.5,-7)   {2898};
					\node (a02) at (-0.5,-7)   {8505};
					\node (a02) at (0.5,-7)   {8400};
					\node (a01) at (1.5,-7)   {3458};
					\node (a01) at (2.5,-7)   {588};
					\node (a01) at (3.5,-7)   {34};
					
					\node (a01) at (-4,-8)   {1};
					\node (a01) at (-3,-8)   {510};
					\node (a02) at (-2,-8)   {9075};
					\node (a02) at (-1,-8)   {38850};
					\node (a01) at (0,-8)   {55608};
					\node (a01) at (1,-8)   {34398};
					\node (a02) at (2,-8)   {9702};
					\node (a01) at (3,-8)   {1224};
					\node (a01) at (4,-8)   {55};	
			\end{tikzpicture}}
			\caption{Fibonacci-Fubini triangle $\T$.} 
			\label{tab:Triangle_FS}
		\end{table}
		
		Now we give a recurrence formula which shows the connection between a term and its neighbors in the row before. 
		\begin{theorem} For $2\leq n$ and $1\leq k \leq n$, we have the recurrence formula
			\begin{equation}\label{eq:FS_recu}
				\Tt{n}{k}= \frac{F_{k+1}}{F_{k}}\, \Tt{n-1}{k-1} + k\, \Tt{n-1}{k}.
			\end{equation} 
		\end{theorem}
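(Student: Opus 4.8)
The plan is to reduce the claimed identity directly to the defining formula $\Tt{n}{k}=F_{k+1}\stirling{n}{k}$ together with the Stirling recurrence \eqref{StiR}. First I would substitute the definition into the left-hand side and apply \eqref{StiR}, obtaining
\[
\Tt{n}{k}=F_{k+1}\stirling{n}{k}=F_{k+1}\left(\stirling{n-1}{k-1}+k\stirling{n-1}{k}\right)
=F_{k+1}\stirling{n-1}{k-1}+kF_{k+1}\stirling{n-1}{k}.
\]
Then the second summand is already $k\,\Tt{n-1}{k}$ by definition, so it remains only to rewrite the first summand.

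For the first summand, since $F_k\geq 1$ for every $k\geq 1$ (so division is legitimate), I would write
\[
F_{k+1}\stirling{n-1}{k-1}=\frac{F_{k+1}}{F_k}\,F_k\stirling{n-1}{k-1}=\frac{F_{k+1}}{F_k}\,\Tt{n-1}{k-1},
\]
which gives exactly \eqref{eq:FS_recu}. This is the whole argument; there is no genuine obstacle, only the need to check that the boundary cases are consistent with the stated ranges.

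Specifically, I would note the two degenerate situations. When $k=1$, the factor $F_{k+1}/F_k=F_2/F_1=1$, and the term $\Tt{n-1}{0}$ is $0$ for $n\geq 2$ by the extended-version convention stated before Table~\ref{tab:Triangle_FS}, so \eqref{eq:FS_recu} correctly reduces to $\Tt{n}{1}=1\cdot\Tt{n-1}{1}=\stirling{n}{1}$; this matches \eqref{StiR} with $k=1$. When $k=n$, the term $\Tt{n-1}{k}=\Tt{n-1}{n}=0$, and the identity reduces to $\Tt{n}{n}=\frac{F_{n+1}}{F_n}\Tt{n-1}{n-1}$, i.e.\ $F_{n+1}=\frac{F_{n+1}}{F_n}\cdot F_n$, which is trivially true. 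Hence \eqref{eq:FS_recu} holds for all $2\leq n$ and $1\leq k\leq n$, completing the proof.
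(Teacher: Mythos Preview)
Your argument is correct and is essentially the same as the paper's: both proofs amount to multiplying the Stirling recurrence \eqref{StiR} through by the appropriate Fibonacci factors (the paper multiplies by $F_kF_{k+1}$ and then divides by $F_k$, you substitute and then insert $F_k/F_k$), with your version additionally spelling out the harmless boundary cases $k=1$ and $k=n$.
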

		\begin{proof}
			From the recurrence of the Stirling numbers of the second kind, we have 
			\[ F_{k}F_{k+1}	{n \brace k} = F_{k}F_{k+1}	{n-1 \brace k-1} + F_{k}F_{k+1}	k\, {n-1 \brace k}, \]
			and it immediately implies  the result.
		\end{proof}

		In this section,  we examine some properties of the triangle $\Tt{n}{k}$.

		\subsection{Left-down diagonal sequences of the triangle \texorpdfstring{$\T$}{TFF} }
		
		Now we examine the left-down diagonal sequences of Fibonacci-Fubini triangle given in Table~\ref{tab:Triangle_FS} when $k$ is fixed and $n$ is considered to be a variable. Let we denote these sequences by $(\Tt{n}{k})_{n=k}^\infty=    (\Tt{n}{k})_{n=k}^\infty$. Then the first few sequences are 
		\begin{equation*}
			\begin{aligned}
				(\Tt{n}{1}) &= (1,1,1,1,1,1,1,1,1,1,1,1,1, \ldots) &=&\ \text{A000012}, \\ 
				(\Tt{n}{2}) &= (2, 6, 14, 30, 62, 126, 254, 510, 1022, 2046,  \ldots) &=&\ \text{A000918}, \\ 
				(\Tt{n}{3}) &= (3, 18, 75, 270, 903, 2898, 9075, 27990, 85503,  \ldots)&=&\ \text{A094033},\\
				(\Tt{n}{4}) &= (5, 50, 325, 1750, 8505, 38850, 170525, 728750,  \ldots)&&\ \text{not in the OEIS},\\
				(\Tt{n}{5}) &= (8, 120, 1120, 8400, 55608, 340200, 1973840,   \ldots)&&\ \text{not in the  OEIS}.
			\end{aligned}
		\end{equation*} 
		
		\begin{theorem}\label{th:down1}
			The left-down diagonal sequences $(\Tt{n}{k})_{n=k}^\infty$ in Fibonacci-Fubini triangle are recursively given by the $k$th order constant coefficients homogeneous linear recurrence relations
			\begin{equation}\label{eq:recu_t}
				0 = \sum_{j=0}^{k} \stirlingI{k+1}{k-j+1}\Tt{n-j}{k}, \quad n\geq k+1,
			\end{equation}
			where the coefficients are the Stirling numbers of the first kind (see the triangle A008276  in the OEIS \cite{oeis}).  
		\end{theorem}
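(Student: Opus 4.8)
The plan is to reduce the statement to a classical recurrence for the columns of the second-kind Stirling triangle and then to identify the coefficients. Since $\Tt{n}{k}=F_{k+1}\stirlingII{n}{k}$ and $F_{k+1}$ is a constant not depending on $n$, dividing \eqref{eq:recu_t} by $F_{k+1}$ shows it is enough to prove, for each fixed $k\geq 1$,
\[
\sum_{j=0}^{k} c_j\,\stirlingII{n-j}{k}=0 ,\qquad n\geq k+1,
\]
where $c_j$ is the entry of the signed first-kind Stirling triangle A008276 written $\stirlingI{k+1}{k-j+1}$ in the theorem (explicitly $c_j=(-1)^{j}\stirlingI{k+1}{k-j+1}$ if $\stirlingI{\cdot}{\cdot}$ denotes the unsigned values). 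The starting point is the explicit formula \eqref{StiEx}. Since $0^{m}=0$ for $m\geq1$, the $j=0$ term disappears and for every $m\geq1$ we have $\stirlingII{m}{k}=\sum_{i=1}^{k}\lambda_i\,i^{m}$ with $\lambda_i=(-1)^{k-i}\big/\bigl(i!\,(k-i)!\bigr)\neq0$. Hence, viewed as a sequence in $m\geq1$, the $k$-th left-down diagonal of $\T$ is a linear combination of the $k$ geometric progressions $(i^{m})_{m\geq1}$, $i=1,\dots,k$.

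Next I would invoke the elementary theory of constant-coefficient linear recurrences. With $E$ the forward shift, $(Ea)_m=a_{m+1}$, each $(i^{m})_m$ is annihilated by $E-i$, so the diagonal is annihilated by $P(E)$, where
\[
P(x):=\prod_{i=1}^{k}(x-i)=\sum_{j=0}^{k}c_j\,x^{k-j},\qquad c_0=1 .
\]
Evaluating $P(E)$ on the window $\stirlingII{n-k}{k},\dots,\stirlingII{n}{k}$ gives $\sum_{j=0}^{k}c_j\,\stirlingII{n-j}{k}=0$, \emph{provided} the decomposition of the previous paragraph is valid for each of these $k+1$ terms, i.e.\ provided the smallest index $n-k$ satisfies $n-k\geq1$; this is exactly the range $n\geq k+1$ claimed. (That the bound is sharp is seen at $n=k$, where the left-hand side equals $\stirlingII{k}{k}=1\neq0$.) It then remains to identify $c_j$ with the asserted Stirling numbers: multiplying $P(x)$ by $x$ gives the falling factorial $x\,P(x)=x(x-1)\cdots(x-k)=x^{\underline{k+1}}=\sum_{m=0}^{k+1}s(k+1,m)\,x^{m}$, where $s$ denotes the signed first-kind Stirling numbers, so $P(x)=\sum_{m=1}^{k+1}s(k+1,m)\,x^{m-1}$; comparing coefficients of $x^{k-j}$ yields $c_j=s(k+1,k-j+1)$, which is the A008276 entry written $\stirlingI{k+1}{k-j+1}$ in the statement. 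Multiplying back by $F_{k+1}$ gives \eqref{eq:recu_t}. (The same coefficients can alternatively be read off the column generating function $\sum_{n\geq k}\stirlingII{n}{k}x^{n}=x^{k}\big/\prod_{i=1}^{k}(1-ix)$.)

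Finally I would record that the recurrence has order exactly $k$: the characteristic polynomial $P$ has the $k$ distinct nonzero roots $1,\dots,k$, and because every $\lambda_i\neq0$ no proper divisor of $P$ annihilates the diagonal, so $k$ is minimal; also $c_0=1\neq0$, so \eqref{eq:recu_t} indeed recovers $(\Tt{n}{k})_{n\geq k}$ from its first $k$ entries. I do not expect a genuine obstacle here; the only points that need care are (i) fixing the precise lower bound $n\geq k+1$, which is forced by the requirement $n-k\geq1$ so that the $0^{\,n-k}$ term is genuinely absent, and (ii) matching the bare product $\prod_{i=1}^{k}(x-i)$ to the first-kind Stirling entries with the exact shift $k-j+1$ and the sign convention of the A008276 triangle.
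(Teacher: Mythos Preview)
Your argument is correct and takes a genuinely different route from the paper. The paper proceeds by induction on $k$: it verifies the base case $k=1$ directly, assumes \eqref{eq:recu_t} holds with $k-1$ in place of $k$, and then manipulates $\sum_{j=0}^{k}\stirlingI{k+1}{k-j+1}\stirlingII{n-j}{k}$ using the recurrences $\stirlingI{k+1}{m}=\stirlingI{k}{m-1}-k\stirlingI{k}{m}$ and $\stirlingII{n-j}{k}-k\stirlingII{n-j-1}{k}=\stirlingII{n-j-1}{k-1}$ to reduce to the inductive hypothesis. Your approach instead invokes the explicit formula \eqref{StiEx} to write the $k$th column as a finite linear combination of the geometric sequences $(i^{m})_{m\ge1}$, $i=1,\dots,k$, so that the annihilating operator $\prod_{i=1}^{k}(E-i)$ is immediate; you then read off the coefficients from the falling-factorial expansion $x\prod_{i=1}^{k}(x-i)=x^{\underline{k+1}}=\sum_{m}s(k+1,m)x^{m}$.

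What each approach buys: your argument is more conceptual and in one stroke yields the characteristic polynomial $p^{(k)}(x)=(x-1)\cdots(x-k)$, which the paper only records afterwards as a separate corollary; it also transparently explains the sharp lower bound $n\ge k+1$ and the minimality of the order $k$ (all $\lambda_i\ne0$). The paper's induction, on the other hand, stays entirely within the two triangular recurrences and never appeals to the closed form \eqref{StiEx} or to the general theory of linear recurrences, so it is more self-contained at the level of Stirling identities. Your handling of the boundary case (needing $n-k\ge1$ so that the $0^{n-k}$ term genuinely vanishes) and of the sign convention for $\stirlingI{\cdot}{\cdot}$ (the paper uses the signed version, as its recalled recurrence $\stirlingI{n}{k}=\stirlingI{n-1}{k-1}-(n-1)\stirlingI{n-1}{k}$ shows) are both correct.
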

		\begin{proof} Recall the identities for the Stirling numbers of the first kind for $n\geq 1$  that  ${n \brack k} = {n-1 \brack k-1} - (n-1){n-1 \brack k}$ for $k\geq1$, moreover, ${n \brack 0}=0$, ${n \brack 1}=(-1)^{n-1}(n-1)!$, ${n \brack n}=1$, and  ${n \brack k}=0$, if $k>n$.
			
			We prove the statement by induction on $k$.
			If $k=1$, then  $\sum_{j=0}^{1} {2 \brack 2-j}\Tt{n-j}{k}= \Tt{n}{1}-\Tt{n-1}{1}=0$, where $n\geq 2$.
			
			Suppose that \eqref{eq:recu_t} holds in the case $k-1$, so 
			\[ 0 = \sum_{j=0}^{k-1} {k \brack k-j}\Tt{n-j}{k-1}=\sum_{j=0}^{k-1} {k \brack k-j}F_k {n-j \brace k-1} , \text{ for } n\geq k.\]
			Indeed
			\begin{equation} \label{eq:hyp01}
				\sum_{j=0}^{k-1} {k \brack k-j} {n-j \brace k-1} =0
			\end{equation} also holds.  
			Using  the induction hypothesis  \eqref{eq:hyp01} we obtain
			\begin{eqnarray*}
				\sum_{j=0}^{k} {k+1 \brack k-j+1}\Tt{n-j}{k} &=& 
				\sum_{j=0}^{k} {k+1 \brack k-j+1}\Tt{n-j}{k}= \sum_{j=0}^{k} {k+1 \brack k-j+1}F_{k+1} {n-j \brace k} \\
				&=&  F_{k+1}   \sum_{j=0}^{k} \left( {k \brack k-j}- k {k \brack k-j+1} \right) {n-j \brace k} \\
				&=&  F_{k+1}  \left( \sum_{j=0}^{k-1} {k \brack k-j}{n-j \brace k} - k  \sum_{j=1}^{k} {k \brack k-j+1}  {n-j \brace k}  \right)\\
				&=&  F_{k+1}  \left( \sum_{j=0}^{k-1} {k \brack k-j}{n-j \brace k} - k  \sum_{j=0}^{k-1} {k \brack k-j}  {n-j-1 \brace k}  \right)\\
				&=&  F_{k+1}  \left( \sum_{j=0}^{k-1} {k \brack k-j} \left({n-j \brace k} - k   {n-j-1 \brace k}\right)  \right)\\
				&=&  F_{k+1} \sum_{j=0}^{k-1} {k \brack k-j}{n-j \brace k-1}  =0.
			\end{eqnarray*}  
		\end{proof}
		
		The recurrence relations of the first few sequences are 
		\begin{equation*}
			\begin{aligned}
				\Tt{n}{1} &= \Tt{n-1}{1} , \\ 
				\Tt{n}{2} &= 3\Tt{n-1}{2}-2\Tt{n-2}{2}, \\ 
				\Tt{n}{3} &= 6\Tt{n-1}{3}-11\Tt{n-2}{3}+6\Tt{n-3}{3},\\
				\Tt{n}{4} &= 10\Tt{n-1}{4}-35\Tt{n-2}{4}+50\Tt{n-3}{4}-24\Tt{n-4}{4},\\
				\Tt{n}{5} &= 15\Tt{n-1}{5}-85\Tt{n-2}{5}+225\Tt{n-3}{5}-27\Tt{n-4}{5} +120\Tt{n-5}{5}.
			\end{aligned}
		\end{equation*} 
		

			Since the Stirling numbers of the first kind are the coefficients in the expansion of the falling factorial 
			$(x)_n = x(x-1)(x-2)\cdots (x-n+1)$, then Theorem~\ref{th:down1} implies the following corollary.
			\begin{corollary}
				The characteristic polynomial $p^{(k)}(x)$ of recurrence sequence $(\Tt{n}{k})_{n=k}^\infty$ is 
				\[p^{(k)}(x)=(x-1)(x-2)\cdots (x-k).\]
			\end{corollary}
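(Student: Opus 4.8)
The plan is to read the characteristic polynomial straight off the recurrence \eqref{eq:recu_t} of Theorem~\ref{th:down1} and then recognize it as a falling factorial. By definition, the characteristic polynomial attached to the $k$th order homogeneous linear recurrence $0 = \sum_{j=0}^{k} \stirlingI{k+1}{k-j+1}\Tt{n-j}{k}$ is
\[
p^{(k)}(x) = \sum_{j=0}^{k} \stirlingI{k+1}{k-j+1}\, x^{k-j};
\]
its degree is genuinely $k$ since the leading coefficient is $\stirlingI{k+1}{k+1}=1\neq 0$, so $(\Tt{n}{k})_{n=k}^\infty$ really does satisfy a recurrence of order exactly $k$ with this polynomial.

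First I would substitute $i=k-j+1$ (so $j=k+1-i$ and $k-j=i-1$), which rewrites the sum as $p^{(k)}(x)=\sum_{i=1}^{k+1}\stirlingI{k+1}{i}\,x^{i-1}$. Multiplying by $x$ and using $\stirlingI{k+1}{0}=0$ (recall ${m\brack 0}=0$ for $m\ge 1$) I get the clean form $x\,p^{(k)}(x)=\sum_{i=0}^{k+1}\stirlingI{k+1}{i}\,x^{i}$.

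Next I would invoke the classical fact that the signed Stirling numbers of the first kind are precisely the coefficients of the falling factorial, namely $\sum_{i=0}^{m}\stirlingI{m}{i}\,x^{i}=(x)_m=x(x-1)(x-2)\cdots(x-m+1)$; applying it with $m=k+1$ yields $x\,p^{(k)}(x)=x(x-1)(x-2)\cdots(x-k)$. Cancelling the common factor $x$ (a legitimate identity of polynomials, or equivalently comparing the two sides coefficient by coefficient) gives $p^{(k)}(x)=(x-1)(x-2)\cdots(x-k)$, which is the claim.

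There is essentially no genuine obstacle here; the only point demanding a moment of care is the index bookkeeping in the substitution $j\mapsto i$ together with the validity of adjoining the (zero) term $\stirlingI{k+1}{0}$ so that the falling-factorial identity applies verbatim. As a consistency check — and an independent route — one may observe that the explicit formula $\Tt{n}{k}=F_{k+1}\stirlingII{n}{k}=\tfrac{F_{k+1}}{k!}\sum_{j=0}^{k}(-1)^{k-j}\binom{k}{j}j^{n}$ exhibits $(\Tt{n}{k})_{n\ge k}$ as a linear combination of the geometric sequences $1^{n},2^{n},\dots,k^{n}$ (the $j=0$ term drops out for $n\ge 1$), whose distinct ratios $1,2,\dots,k$ force the minimal characteristic polynomial to be exactly $\prod_{j=1}^{k}(x-j)$, in agreement with the computation above.
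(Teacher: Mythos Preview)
Your proof is correct and follows essentially the same approach as the paper: the paper simply remarks that since the (signed) Stirling numbers of the first kind are the coefficients of the falling factorial $(x)_{n}=x(x-1)\cdots(x-n+1)$, the corollary follows immediately from Theorem~\ref{th:down1}. You have merely spelled out this one-line observation in detail (the index shift and the cancellation of the factor $x$), and your added consistency check via the explicit formula \eqref{StiEx} is a nice independent confirmation but not needed.
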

			
			Therefore, our proof of Theorem~\ref{th:down1} is independent of the Fibonacci numbers, then  we obtain a similar proof a new and interesting connection between the Stirling numbers of the first and the second kind, as follows. 
			\begin{corollary}
				Let $\left( {n \brace k} \right)_{n=k}^\infty$ be the $k$th left-down diagonal sequences in triangle of Stirling numbers of the second kind, A008277 in the OEIS, then it is a $k$th order constant coefficients homogeneous linear recurrence relation
				\begin{equation*}
					0 = \sum_{j=0}^{k} {k+1 \brack k-j+1}{n-j \brace k}, \quad n\geq k+1,
				\end{equation*}
				where the coefficients are the Stirling numbers of the first kind (see the triangle A008276  in the OEIS).  
			\end{corollary}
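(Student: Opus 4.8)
The plan is to note that, up to a nonzero multiplicative constant, the statement is nothing but the conclusion of Theorem~\ref{th:down1}. Since $\Tt{n}{k}=F_{k+1}\stirlingII{n}{k}$, the recurrence \eqref{eq:recu_t} can be rewritten as
\[
0=\sum_{j=0}^{k}\stirlingI{k+1}{k-j+1}\Tt{n-j}{k}
 =F_{k+1}\sum_{j=0}^{k}\stirlingI{k+1}{k-j+1}\stirlingII{n-j}{k},\qquad n\ge k+1,
\]
and because $F_{k+1}>0$ for every $k\ge 0$ we may cancel $F_{k+1}$ to obtain exactly the claimed identity for the diagonal $\bigl(\stirlingII{n}{k}\bigr)_{n=k}^{\infty}$. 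The assertion that this is a genuine $k$th order recurrence, with characteristic polynomial $(x-1)(x-2)\cdots(x-k)$, then follows as in the preceding corollary: the leading coefficient $\stirlingI{k+1}{k+1}=1$ and the last coefficient $\stirlingI{k+1}{1}=(-1)^{k}k!$ are both nonzero.

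Equivalently --- and this is the point of the remark preceding the statement --- one repeats the induction on $k$ of Theorem~\ref{th:down1} verbatim, simply deleting every Fibonacci factor. The base case $k=1$ is $\stirlingII{n}{1}-\stirlingII{n-1}{1}=1-1=0$ for $n\ge 2$. For the step, assume $\sum_{j=0}^{k-1}\stirlingI{k}{k-j}\stirlingII{n-j}{k-1}=0$ for $n\ge k$; split $\stirlingI{k+1}{k-j+1}=\stirlingI{k}{k-j}-k\,\stirlingI{k}{k-j+1}$ by the first-kind recurrence, reindex the two resulting sums so each runs over $j=0,\dots,k-1$ (the dropped boundary terms vanish since $\stirlingI{k}{0}=0$ and $\stirlingI{k}{k+1}=0$), collapse $\stirlingII{n-j}{k}-k\,\stirlingII{n-j-1}{k}=\stirlingII{n-j-1}{k-1}$ by \eqref{StiR}, and recognize the surviving sum as the induction hypothesis evaluated at $n-1$ (legitimate since $n-1\ge k$), hence $0$.

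There is essentially no obstacle: the only thing one has to verify is that the computation in the proof of Theorem~\ref{th:down1} never invoked an identity peculiar to the Fibonacci numbers, and inspection confirms that the factors $F_{k+1}$ (and the auxiliary $F_{k}$ that appears when converting between $\Tt{\cdot}{\cdot}$ and $\stirlingII{\cdot}{\cdot}$) are carried inertly through every line. The one piece of bookkeeping worth spelling out explicitly is the justification of the two index shifts and the vanishing of the boundary terms, exactly as in the parent proof.
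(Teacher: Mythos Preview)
Your proposal is correct and matches the paper's intent: the paper does not give a separate proof of this corollary but simply remarks that the induction in Theorem~\ref{th:down1} never uses any property of the Fibonacci numbers, which is exactly your second route (and your first route, dividing the conclusion of Theorem~\ref{th:down1} by the nonzero constant $F_{k+1}$, is an even quicker way to arrive at the same place). Your observation that the collapsed Stirling recurrence yields $\stirlingII{n-j-1}{k-1}$, so that the induction hypothesis is invoked at $n-1$, is in fact a small correction to a typo in the paper's displayed computation.
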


			\begin{corollary}
				Using identity \eqref{eq:FS_recu}  for  sequence $\Tt{n}{k}$   we obtain
				\begin{equation*}
					F_{k}\,	\Tt{n}{k} = F_{k+1}\, \Tt{n-1}{k-1}+ k F_{k}\, \Tt{n-1}{k}, \quad \text{ for } k\ge2,\, n\geq k+1.
				\end{equation*}
			\end{corollary}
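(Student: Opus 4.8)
The plan is to obtain the identity by simply clearing the denominator in the recurrence \eqref{eq:FS_recu}. That recurrence reads
\[
\Tt{n}{k}= \frac{F_{k+1}}{F_{k}}\, \Tt{n-1}{k-1} + k\, \Tt{n-1}{k},
\qquad 2\le n,\ 1\le k\le n,
\]
and in particular it is valid on the sub-range $k\ge 2$, $n\ge k+1$ that the corollary concerns. I would multiply both sides by $F_k$. For $k\ge 2$ we have $F_k\ge 1$, so $F_k\ne 0$ and this multiplication is legitimate (and reversible); it is exactly the nonvanishing of $F_k$ that also makes the quotient $F_{k+1}/F_k$ in \eqref{eq:FS_recu} well defined, so nothing new needs to be checked. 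On the right-hand side the first term $\frac{F_{k+1}}{F_k}\,\Tt{n-1}{k-1}$ becomes $F_{k+1}\,\Tt{n-1}{k-1}$ and the second term $k\,\Tt{n-1}{k}$ becomes $kF_k\,\Tt{n-1}{k}$, while the left-hand side becomes $F_k\,\Tt{n}{k}$. Collecting these gives precisely the asserted equality.

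As an alternative (and perhaps cleaner) route, one can bypass \eqref{eq:FS_recu} entirely and argue directly from the definition $\Tt{n}{k}=F_{k+1}\stirlingII{n}{k}$ together with the Stirling recurrence \eqref{StiR}, namely $\stirlingII{n}{k}=\stirlingII{n-1}{k-1}+k\stirlingII{n-1}{k}$. Multiplying \eqref{StiR} through by $F_kF_{k+1}$ and regrouping the factors, the left-hand side is $F_k\cdot F_{k+1}\stirlingII{n}{k}=F_k\,\Tt{n}{k}$, the first term on the right is $F_{k+1}\cdot F_k\stirlingII{n-1}{k-1}=F_{k+1}\,\Tt{n-1}{k-1}$, and the second is $kF_k\cdot F_{k+1}\stirlingII{n-1}{k}=kF_k\,\Tt{n-1}{k}$, which again yields the claim.

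There is essentially no obstacle here: the statement is a one-line algebraic consequence, and the only point that warrants a remark is the harmless hypothesis $k\ge 2$, which guarantees $F_k\neq 0$ so that the denominators may be cleared. I would present the first route in one or two sentences, since it is already phrased in the corollary as "using identity \eqref{eq:FS_recu}".
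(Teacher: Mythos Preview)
Your proposal is correct and matches the paper's intent: the corollary is stated without a separate proof, being an immediate consequence of multiplying \eqref{eq:FS_recu} through by $F_k$, exactly as you do. Your alternative route via $F_kF_{k+1}\stirlingII{n}{k}$ is in fact precisely the intermediate identity displayed in the paper's proof of the theorem establishing \eqref{eq:FS_recu}, so both of your arguments align with the paper.
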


			\subsection{Right-down diagonal sequences of the triangle \texorpdfstring{$\T$}{TFF} }

			Now we examine the right-down diagonal sequences 
			$(d_n^{(r)})_{n=r}^\infty= (\Tt{n}{n-r+1})_{n=r}^\infty$ 
			for all $r\geq 1$. The first few sequences are 
			\begin{equation*}
				\begin{aligned}
					(d_n^{(1)}) &= (1, 2, 3, 5, 8, 13, 21, 34, 55, 89, 144, 233, 377 \ldots) &=&\ \text{A000045}, \\ 
					(d_n^{(2)}) &= (1, 6, 18, 50, 120, 273, 588, 1224, 2475, 4895,   \ldots) &=&\ \text{A086926}, \\ 
					(d_n^{(3)}) &= (1, 14, 75, 325, 1120, 3458, 9702, 25500, 63525,   \ldots)&=&\ \text{not in the  OEIS},\\
					(d_n^{(4)}) &= (1, 30, 270, 1750, 8400, 34398, 123480, 403920,  \ldots)&=&\ \text{not in the OEIS},\\
					(d_n^{(5)}) &= (1,62,903,8505,55608,296751,1343727,5406918,  \ldots)&=&\ \text{not in the OEIS},\\
					(d_n^{(6)}) &= (1, 126, 2898, 38850, 340200, 2333331, 13175316,    \ldots)&=&\ \text{not in the OEIS}.
				\end{aligned}
			\end{equation*} 
			
			Using identity \eqref{eq:FS_recu}  for  sequence $d_n^{(r)}$ with $k=n-r+1$  we obtain
			\begin{equation}\label{cica}
				d_n^{(r)} = \frac{F_{n-r+2}}{F_{n-r+1}}\,d_{n-1}^{(r)} + (n-r+1) d_{n-1}^{(r-1)}, \quad \text{ for } r\ge2,\, n\geq r+1.
			\end{equation}
			
			Their recurrence relations (as we see later) are 
			\begin{equation*}
				\begin{aligned}
					d_n^{(1)} &= d_{n-1}^{(1)}+d_{n-2}^{(1)}, \\ 
					d_n^{(2)} &= 3d_{n-1}^{(2)}-5d_{n-3}^{(2)}+3d_{n-5}^{(2)}+d_{n-6}^{(2)}, \\ 
					d_n^{(3)} &= 5d_{n-1}^{(3)}-5d_{n-2}^{(3)} -10d_{n-2}^{(3)}+15d_{n-4}^{(3)} +11d_{n-5}^{(3)} -15d_{n-6}^{(3)} -10d_{n-7}^{(3)} \\
					& \hspace{1.5cm}  +5d_{n-8}^{(3)} +5 d_{n-9}^{(3)}+d_{n-10}^{(3)},
				\end{aligned}
			\end{equation*}
			where the coefficients are in Table~\ref{fig:Triangle_coeff2}, or the terms of every second odd rows of the reverse of triangle A084610 multiplied by $-1$. (The triangle A084610 is read by rows, where the $n$-th row lists the $2n+1$ coefficients of $(1+x-x^2)^n$.)
			\begin{theorem}
				The characteristic polynomial of recurrence sequence $(d_n^{(r)})$ is \[q^{(r)}(x)=(x^2-x-1)^{2r-1}.\]
			\end{theorem}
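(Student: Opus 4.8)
The plan is to pass to the closed form $d_n^{(r)}=\mathcal F(n,n-r+1)=F_{n-r+2}\,\stirlingII{n}{n-r+1}$ and reduce the statement to the elementary theory of constant‑coefficient linear recurrences. Two facts drive the argument. First, writing $m=r-1$, the quantity $g_m(n):=\stirlingII{n}{n-m}$ agrees, for $n\ge m$, with a polynomial in $n$ of degree \emph{exactly} $2m$. Second, by Binet's formula \eqref{FibB}, $F_{n-r+2}=\tfrac{\alpha^{2-r}}{\sqrt5}\,\alpha^{n}-\tfrac{\beta^{2-r}}{\sqrt5}\,\beta^{n}$ is a linear combination of $\alpha^{n}$ and $\beta^{n}$ with \emph{nonzero} coefficients. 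Granting these,
\[
d_n^{(r)}=\Bigl(\tfrac{\alpha^{2-r}}{\sqrt5}\,g_{r-1}(n)\Bigr)\alpha^{n}+\Bigl(-\tfrac{\beta^{2-r}}{\sqrt5}\,g_{r-1}(n)\Bigr)\beta^{n}=P(n)\,\alpha^{n}+Q(n)\,\beta^{n},
\]
where $P$ and $Q$ are polynomials of degree exactly $2r-2$.

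I would prove the first fact by induction on $m$. The base case is $g_0(n)=\stirlingII{n}{n}=1$. For the step, substituting $k=n-m$ into the Stirling recurrence \eqref{StiR} gives $g_m(n)=g_m(n-1)+(n-m)\,g_{m-1}(n-1)$, so the forward difference $g_m(n)-g_m(n-1)$ equals $(n-m)\,g_{m-1}(n-1)$, which by the inductive hypothesis is a polynomial in $n$ of degree $1+(2m-2)=2m-1$ with positive leading coefficient. Summing this difference (Faulhaber) shows that $g_m$ is a polynomial of degree $2m$ whose leading coefficient is again positive, hence nonzero; in fact one can read off the leading coefficient $1/(2^{m}m!)$, though only its non‑vanishing is needed.

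With $P,Q$ in hand, invoke the standard description of minimal recurrences for $C$‑finite sequences: if $a_n=\sum_i P_i(n)\gamma_i^{n}$ with the $\gamma_i$ pairwise distinct and nonzero and each $P_i\ne 0$, then the minimal monic characteristic polynomial of $(a_n)$ is $\prod_i(x-\gamma_i)^{\deg P_i+1}$. (Reason: with $E$ the shift operator, $(E-\gamma)^{d+1}$ annihilates $P(n)\gamma^{n}$ precisely when $\deg P\le d$, the operators $E-\gamma_i$ commute, and the associated generalized eigenspaces are independent for distinct $\gamma_i$.) Here $\gamma_1=\alpha\ne\beta=\gamma_2$ — the two roots of $x^{2}-x-1$ — and $\deg P=\deg Q=2r-2$, so the characteristic polynomial of $(d_n^{(r)})$ is
\[
(x-\alpha)^{2r-1}(x-\beta)^{2r-1}=\bigl((x-\alpha)(x-\beta)\bigr)^{2r-1}=(x^{2}-x-1)^{2r-1},
\]
using $\alpha+\beta=1$ and $\alpha\beta=-1$; this is $q^{(r)}(x)$. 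As a sanity check, $(x^{2}-x-1)^{3}=x^{6}-3x^{5}+5x^{3}-3x-1$ reproduces exactly the recurrence for $d_n^{(2)}$ listed before the theorem, and similarly for the other displayed cases.

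The genuine obstacle is the first fact — that $\stirlingII{n}{n-r+1}$ is a polynomial in $n$ of degree \emph{exactly} $2r-2$. The exact degree (not merely a bound) is what prevents the characteristic polynomial from collapsing to a proper divisor of $(x^{2}-x-1)^{2r-1}$, and it is precisely what the induction through \eqref{StiR} is engineered to deliver via the positivity of the leading coefficient. The remaining ingredients — Binet's formula and the minimal‑polynomial description of $C$‑finite sequences — are standard. A minor care point is that $\mathcal F(n,k)$ is defined only for $1\le k\le n$, so $d_n^{(r)}$ lives for $n\ge r$; since the characteristic polynomial concerns the eventual behaviour of the sequence, this causes no trouble.
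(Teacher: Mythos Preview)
Your proof is correct and shares the same target as the paper's: write $d_n^{(r)}=P(n)\alpha^{n}+Q(n)\beta^{n}$ with $P,Q$ polynomials of degree exactly $2r-2$, and then read off the characteristic polynomial. Where you differ is in how you reach that form. The paper inducts on $r$ through the cross recurrence \eqref{cica}, multiplies by $F_{n+1-r}$, and separates the coefficients of $\alpha^{2n+3-2r}$, $\beta^{2n+3-2r}$ and $(-1)^{n+1-r}$; this eventually yields the difference equation $p_{r+1}(n+1)-p_{r+1}(n)=(n+1-r)p_r(n)$ and hence $\deg p_{r+1}=2r$. You instead factor $d_n^{(r)}=F_{n-r+2}\,\stirlingII{n}{n-r+1}$ at the outset, isolate the exponential part via Binet, and push all the work into the elementary fact that $\stirlingII{n}{n-m}$ is a polynomial in $n$ of degree exactly $2m$, which you obtain from the Stirling recurrence \eqref{StiR} by the very same difference‑equation argument. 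Your route is shorter and more transparent (the polynomial and exponential contributions are visibly separated), and you state the minimality argument via $C$‑finite theory explicitly, which the paper leaves implicit in the phrase ``it is sufficient to prove''. The paper's route, on the other hand, is self‑contained within the recursive structure of the triangle and does not appeal to the general polynomiality of $\stirlingII{n}{n-m}$.
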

			\begin{proof}
				The zeros $\alpha$ and $\beta$ of $q^{(r)}(x)$ both have multiplicity $2r-1$.
				Note that $\beta=(1-\sqrt{5})/2$ is the algebraic conjugate of $\alpha=(1+\sqrt{5})/2$. Hence it is sufficient to prove that $d_n^{(r)}$ can be written in the form
				\[
				d_n^{(r)}=p_r(n)\alpha^{n+2-r}+\bar{p}_r(n)\beta^{n+2-r},
				\]
				where $p_r(n)$ is a polynomial of $n$ with coefficients from $\mathbb{Q}[\alpha]$, $\deg(p_r)=2r-2$,	and $\bar{p}_r(n)$ is conjugate of $p_r(n)$ (i.e.,~$\alpha$ is replaced by $\beta$ in the coefficients).
				
				We will use the technique of induction. Obviously, the statement is true for $r=1$. Indeed, 
				\[d_n^{(1)}= \Tt{n}{n}=F_{n+1}{n \brace n}=F_{n+1}=\frac{1}{\sqrt{5}}\alpha^{n+1}-\frac{1}{\sqrt{5}}\beta^{n+1}\quad(n\ge1),\]		
				that is $p_1(n)=1/\sqrt{5}$ is a constant polynomial ($\deg(p_1)=0$), $\bar{p}_1(n)=-1/\sqrt{5}$.
				
				Assume now that the theorem is true for some $r\ge1$. We will prove that there exist polynomial $p_{r+1}(n)$ with the given conditions above. In order to do that we are looking for $d_{r+1}(n+1)$ in the form
				\[
				d_{n+1}^{(r+1)}=p_{r+1}(n+1)\alpha^{n+2-r}+\bar{p}_{r+1}(n+1)\beta^{n+2-r},
				\]
				and will show the (unique) existence of it.
				Multiply (\ref{cica}) by $F_{n+1-r}$, and use the induction hypothesis to obtain
				\begin{multline*}
					F_{n+1-r}\left(p_{r+1}(n+1)\alpha^{n+2-r}+\bar{p}_{r+1}(n+1)\beta^{n+2-r}\right)=\\  F_{n+2-r}\left(p_{r+1}(n)\alpha^{n+1-r}+\bar{p}_{r+1}(n)\beta^{n+1-r}\right)\\
					+(n+1-r)F_{n+1-r}\left(p_{r}(n)\alpha^{n+2-r}+\bar{p}_{r}(n)\beta^{n+2-r}\right).
				\end{multline*}
				Recall that $\alpha\beta=-1$. Eliminate the coefficients of the terms $\alpha^{2n+3-2r}$, $\beta^{2n+3-2r}$, and $(-1)^{n+1-r}$, respectively. We obtain
				\begin{equation*}
					\begin{aligned}
						p_{r+1}(n+1)&=\phantom{-}  p_{r+1}(n)+(n+1-r)p_r(n),\\
						-\bar{p}_{r+1}(n+1)&=-\bar{p}_{r+1}(n)-(n+1-r)\bar{p}_r(n),\\
						\bar{p}_{r+1}(n+1)\beta-{p}_{r+1}(n+1)\alpha&=-{p}_{r+1}(n)\beta+\bar{p}_{r+1}(n)\alpha+(n+1-r)\bar{p}_r(n)\beta\\
						&\qquad\qquad -(n+1-r){p}_r(n)\alpha,
					\end{aligned}
				\end{equation*}
				respectively. The first two equalities are equivalent since they are the conjugates of each other. The first information leads to
				$(n+1-r)p_r(n)=p_{r+1}(n+1)-p_{r+1}(n)$. Plug it into the third equality. After straightforward calculations we have
				\[
				(\alpha-\beta)(p_{r+1}(n)+\bar{p}_{r+1}(n))=0,
				\]
				subsequently $\bar{p}_{r+1}(n)=-p_{r+1}(n)$ follows since $\alpha-\beta=\sqrt{5}\ne0$.
				
				Thus we need to consider only the first equality 	$p_{r+1}(n+1)=p_{r+1}(n)+(n+1-r)p_r(n)$ again. Clearly, it is equivalent to
				\begin{equation}\label{marc4}
					p_{r+1}(n+1)-p_{r+1}(n)=(n+1-r)p_r(n),
				\end{equation}
				and from the theory of difference equations we know that this condition, together with the "final condition" $p_{r+1}(r+1)=1/\sqrt{5}$ it provides uniquely the polynomial $p_{r+1}(n)$. Since $\deg(p_r)=2r-2$ and $(n+1-r)$ is a linear factor, then the degree of $p_{r+1}$ is $2r$.
			\end{proof} 
			
			\begin{example}
				The first few polynomials are
				\begin{eqnarray*}
					p_1(n)&=&\frac{1}{\sqrt{5}},\\
					p_2(n)&=&\frac{1}{2\sqrt{5}}(n-1)n,\\
					p_3(n)&=&\frac{1}{24\sqrt{5}}(3n-5)(n-2)(n-1)n.
				\end{eqnarray*}
				In this example, knowing $p_2(n)$ we illustrate how $p_3(n)$ can be obtained.
				Since $\deg(p_3(n))=2\cdot3-2=4$, we will find $p_3(n)$ in the form $p_3(n)=an^4+bn^3+cn^2+dn+e$. Thus
				\begin{equation}\label{r1}
					p_3(n+1)-p_3(n)=4an^3+(6a+3b)n^2+(4a+3b+2c)n+(a+b+c+d).
				\end{equation}
				On the other hand,
				\begin{equation}\label{r2}
					(n+1-r)p_r(n)=(n-1)p_2(n)=(n-1)\cdot\frac{1}{2\sqrt{5}}(n-1)n=
					\frac{1}{2\sqrt{5}}n^3-\frac{1}{\sqrt{5}}n^2+\frac{1}{2\sqrt{5}}n.
				\end{equation}
				Comparing the right hand-sides of (\ref{r1}) and (\ref{r2}), and solving the corresponding system of linear equations we conclude
				\[
				a=\frac{1}{8\sqrt{5}},\quad b=-\frac{7}{12\sqrt{5}},\quad c=\frac{7}{8\sqrt{5}},\quad d=-\frac{5}{12\sqrt{5}}.
				\]
				Subsequently,
				\[
				p_3(n)=\frac{1}{24\sqrt{5}}\left(3n^4-14n^3+21n^2-10n+e\right),
				\]
				and the "final condition" admits $e=0$.
				Using this fact, finally we get the decomposition of $p_3(n)$ as it is given in the list above.
			\end{example}

			\begin{table}[!ht]
				\centering
				\scalebox{0.99}{ \begin{tikzpicture}[->,xscale=0.85,yscale=0.65, auto,swap]
						\node(a00) at (0,0)    {1};
						
						\node (a01) at (-1,-1)   {1};
						\node (a02) at (0,-1)   {$-1$};	
						\node (a01) at (1,-1)   {$-1$};
						
						\node (a01) at (-3,-2)   {1};
						\node (a02) at (-2,-2)   {$-3$};
						\node (a02) at (-1,-2)   {0};
						\node (a01) at (0,-2)   {5};
						\node (a01) at (1,-2)   {0};
						\node (a02) at (2,-2)   {$-3$};
						\node (a01) at (3,-2)   {$-1$};
						
						\node (a01) at (-5,-3)   {1};
						\node (a01) at (-4,-3)   {$-5$};
						\node (a01) at (-3,-3)   {5};
						\node (a02) at (-2,-3)   {10};
						\node (a02) at (-1,-3)   {$-15$};
						\node (a01) at (0,-3)   {$-11$};
						\node (a01) at (1,-3)   {15};
						\node (a02) at (2,-3)   {10};
						\node (a01) at (3,-3)   {$-5$};
						\node (a01) at (4,-3)   {$-5$};	
						\node (a01) at (5,-3)   {$-1$};	
						
						\node (a01) at (-7,-4)   {1};
						\node (a01) at (-6,-4)   {$-7$};
						\node (a01) at (-5,-4)   {14};
						\node (a01) at (-4,-4)   {$7$};
						\node (a01) at (-3,-4)   {$-49$};
						\node (a02) at (-2,-4)   {14};
						\node (a02) at (-1,-4)   {$77$};
						\node (a01) at (0,-4)   {$-29$};
						\node (a01) at (1,-4)   {$-77$};
						\node (a02) at (2,-4)   {14};
						\node (a01) at (3,-4)   {$49$};
						\node (a01) at (4,-4)   {$7$};	
						\node (a01) at (5,-4)   {$-14$};	
						\node (a01) at (6,-4)   {$-7$};	
						\node (a01) at (7,-4)   {$-1$};	            
				\end{tikzpicture}}
				\caption{Triangle of coefficients} 
				\label{fig:Triangle_coeff2}
			\end{table}

			\section{Combinatorial interpretation of Fibonacci-Fubini numbers}
			
			We give a combinatorial problem of packing goods on a train in two different ways, the solution of which is given by the Fibonacci-Fubini numbers. Suppose that we have $n$ items of goods, which we distribute into $k$ boxes and load the boxes onto a train of containers as follows.
			
			Given $n$ and $k$ identical and labeled objects and not labeled boxes, respectively. Put the objects into the boxes so that each box contains at least one object. Thus, $k\leq n$. Then sort the boxes in order of the smallest number of elements and label the boxes according to that.
			
			If the empty boxes were originally labeled and ordered, then the objects are placed in the boxes by putting the first one in the first box; the second one either in the first or in the second box. We always put the next object in the sequence into one of the non-empty boxes, or into the next labeled non-empty box. This is also the way to get the sequence of previous filled boxes.  Figure~\ref{fig:Fibo_Stir_01} shows the second type of boxing.

			We also have a $k$-box-long train and on it some $2$-box-long containers. Load the boxes on the train in order from the front of the train, but if there is a container next in line, put the box in the back of the container (e.g., the box always slides backwards in the container).  
			
			It is easy to see that the number of possible trains with containers and boxes having objects is $\Tt{n}{k}$, the sum considering all $k$ from $1$ to $n$ is the Fibonacci-Fubini number $\mathcal{F}_n$.  In Figure~\ref{fig:Fibo_Stir_train01}, there is a container in the front of the train, in Figure~\ref{fig:Fibo_Stir_train02} there is not. 
			
			\begin{figure}[!ht]
				\centering
				\includegraphics[scale=0.85]{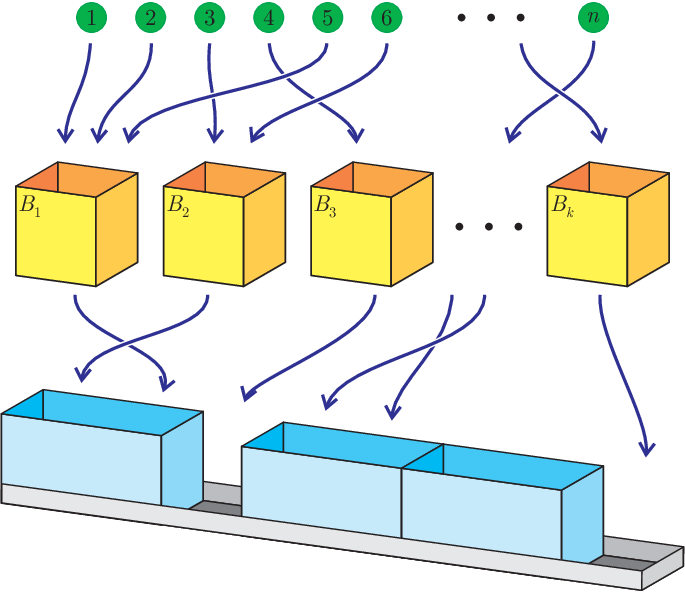} 
				\caption{Combinatorial interpretation of Fibonacci-Fubini numbers}
				\label{fig:Fibo_Stir_01}
			\end{figure}
			
			\begin{figure}[!ht]
				\centering
				\includegraphics[scale=0.85]{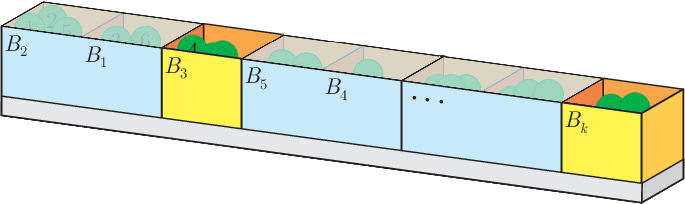} 
				\caption{Combinatorial interpretation: boxes on the train}
				\label{fig:Fibo_Stir_train01}
			\end{figure}
			
			\begin{figure}[!ht]
				\centering
				\includegraphics[scale=0.85]{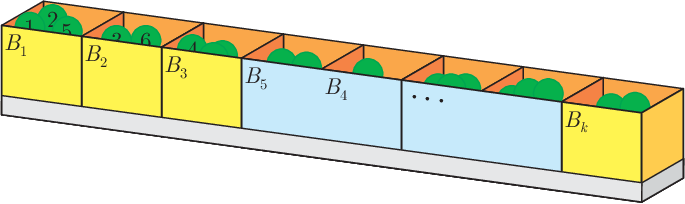} 
				\caption{Combinatorial interpretation: 2nd version of the boxes on the train}
				\label{fig:Fibo_Stir_train02}
			\end{figure}

			\begin{example}We show the connection between the permutations of ordered partitions of the set $[n]=\{1,2,\ldots, n\}$ for $1\leq n\leq 4$ and their combinatorial interpretation. In cases of $n=2$ and $n=3$, the first rows of the tables show the partitions, and the second ones do their appropriate tillings (the trains with boxes and containers).  When $n=4$ we present the connection in an other structure.  
				
				\begin{itemize}
					\item 	For $n=1$,  $\Tt{1}{1}=1$ and  $\mathcal{F}_1=1$. 
					\item 	For $n=2$,  $\Tt{2}{1}=1$, $\Tt{2}{2}=2$, and  $\mathcal{F}_2=3$. 
					\begin{equation*}
						\begin{aligned}
							{12};&&{1|2};&&{2|1}, \\ 
							\squ;&&\squ\, \squ; & &\rec .
						\end{aligned}
					\end{equation*}
					\item For $n=3$,  $\Tt{3}{1}=1$, $\Tt{3}{2}=6$, $\Tt{3}{3}=3$, and  $\mathcal{F}_3=10$.
					\begin{equation*}
						\begin{aligned}
							{123};&&{1|23};&&{23|1};&&{12|3};&&{3|12};&&{13|2};&&{2|13};&&{1|2|3};&&{1|3|2};&&{2|1|3}, \\ 
							\squ;&&\squ\, \squ; & &\rec;&&\squ\, \squ; &&\rec;&&\squ\, \squ; &&\rec;&& \squ\, \squ\, \squ; &&\squ\, \rec;&&\rec\, \squ .
						\end{aligned}
					\end{equation*}
					\item 	For $n=4$,  $\Tt{4}{1}=1$, $\Tt{4}{2}=14$, $\Tt{4}{3}=18$, $\Tt{4}{4}=5$, and $\mathcal{F}_4=38$. 
					\begin{equation*}
						\begin{aligned}
							k=1 &:& \squ &:& 1234,\\
							k=2 &:& \squ\, \squ &:& 1|234;&& 12|34;&& 13|24;&& 14|23;&&  123|4;&&  124|3;&& 134|2,\\
							&& \rec &:& 234|1;&& 34|12;&& 24|13 ;&& 23|14 ;&&   4|123;&&  3|124;&& 2|134,\\
							k=3 &:& \squ\,\squ\,\squ  &:& 1|2|34;&& 1|24|3;&&14|2|3;&& 1|23|4;&&  13|2|4;&& 12|3|4, \\
							&& \squ\,\rec &:& 1|34|2;&& 1|3|24;&&14|3|2;&& 1|4|23;&&  13|4|2;&& 12|4|3, \\
							&& \rec\,\squ &:& 2|1|34;&& 24|1|3;&&2|14|3;&& 23|1|4;&&  2|13|4;&& 3|12|4, \\
							k=4 &:& \squ\,\squ\,\squ\,\squ   &:& 1|2|3|4, \\
							&& \squ\,\squ\,\rec &:& 1|2|4|3,\\
							&& \squ\,\rec\,\squ &:& 1|3|2|4,\\
							&& \rec\,\squ\,\squ &:& 2|1|3|4,\\
							&& \rec\,\rec &:& 2|1|4|3.\\
						\end{aligned}
					\end{equation*}
				\end{itemize}
			\end{example}


			\section*{Declarations}

			\noindent\textbf{Author Contributions.} The authors contributed equally to this manuscript.
			\medskip
			
			\noindent\textbf{Conflict of interest.} The authors declare that they have no conflict of interest.
			\medskip
			
			\noindent\textbf{Data availability.} No data was used for the research described in the article.
			\medskip
			
			\noindent\textbf{Funding.} 
			Y.D.: was partially supported by the DGRSDT grant N$^{\text{o}}$.\ C0656701. L.Sz.  was  supported by Hungarian National Foundation for Scientific Research Grant N$^{\text{o}}$.\ 130909.
			\medskip
			
			\noindent\textbf{Financial interests.} Authors declare they have no financial interests.
			\medskip
			
			\noindent\textbf{Declaration of Generative AI and AI-assisted technologies in the writing process.} During the preparation of this work the authors did not use Generative AI and AI-assisted technologies.
			
		
		%
		\bibliography{Fibo_Stirl_bib} 
		\bibliographystyle{plain}	

\end{document}